\DeclareMathOperator{\cd}{cd}
\DeclareMathOperator{\Hom}{Hom}
\DeclareMathOperator{\image}{Im}
\newcommand{\euV}{\mathscr{V}}
\newcommand{\euE}{\mathscr{E}}
\newcommand{\K}{\mathbb{K}}
\newcommand{\blue}[1]{{#1}}
\DeclareFontFamily{U}{wncy}{}
\DeclareFontShape{U}{wncy}{m}{n}{<->wncyr10}{}
\DeclareSymbolFont{mcy}{U}{wncy}{m}{n}
\DeclareMathSymbol{\Sha}{\mathord}{mcy}{"58}
\DeclareMathSymbol{\sha}{\mathord}{mcy}{"78}
\begin{document}

\newtheorem{thm}{Theorem}[section]
\newtheorem{cor}[thm]{Corollary}
\newtheorem{lem}[thm]{Lemma}
\newtheorem{prop}[thm]{Proposition}
\newtheorem{defin}[thm]{Definition}
\newtheorem{exam}[thm]{Example}
\newtheorem{examples}[thm]{Examples}
\newtheorem{rem}[thm]{Remark}
\newtheorem{case}{\sl Case}
\newtheorem{claim}{Claim}
\newtheorem{fact}[thm]{Fact}
\newtheorem{question}[thm]{Question}
\newtheorem{conj}[thm]{Conjecture}
\newtheorem*{notation}{Notation}
\swapnumbers
\newtheorem{rems}[thm]{Remarks}

\newtheorem{questions}[thm]{Questions}
\numberwithin{equation}{section}

\newcommand{\gr}{\mathrm{gr}}
\newcommand{\isom}{\cong}
\newcommand{\dbC}{\mathbb{C}}
\newcommand{\F}{\mathbb{F}}
\newcommand{\dbN}{\mathbb{N}}
\newcommand{\Q}{\mathbb{Q}}
\newcommand{\dbR}{\mathbb{R}}
\newcommand{\dbU}{\mathbb{U}}
\newcommand{\Z}{\mathbb{Z}}
\newcommand{\dbl}{[\![}
\newcommand{\dbr}{]\!]}

\newcommand{\Lie}{\mathfrak{L}}
\newcommand{\rlie}{\mathfrak{r}}
\newcommand{\calX}{\mathcal{X}}
\newcommand{\calU}{\mathcal{U}}
\newcommand{\FpX}{\mathbb{F}_p\langle X\rangle}
\newcommand{\FppG}{\mathbb{F}_p[\![G]\!]}
\newcommand{\FppX}{\mathbb{F}_p\langle\!\langle X\rangle\!\rangle}
\newcommand{\bfT}{\mathbf{T}}
\newcommand{\bfLambda}{\mathbf{\Lambda}}
\newcommand{\bfA}{\mathbf{A}}
\newcommand{\bfS}{\mathbf{S}}
\newcommand{\bfQ}{\mathbf{Q}}


\title[Mild pro-$p$ groups and Koszulity]{Mild pro-$p$ groups  and the Koszulity conjectures}
\author{J. Min\'a\v{c}}
\author{F. W. Pasini}
\author{C. Quadrelli}
\author{N. D. T\^an}
\address{Department of Mathematics, Western University, London ON, Canada}
\email{minac@uwo.ca}
\address{Brain and Mind Institute, Western University, London ON, Canada}
\email{fpasini@uwo.ca}
\address{Department of of Science and High-Tech, University of Insubria, Como, Italy-EU}
\email{claudio.quadrelli@uninsubria.it}
\address{School of Applied Mathematics and Informatics, Hanoi University of Science and Technology, Hanoi, Vietnam} 
\email{tan.nguyenduy@hust.edu.vn}
\date{\today}

\begin{abstract}
 Let $p$ be a prime, and $\F_p$ the field with $p$ elements.
We prove that if $G$ is a mild pro-$p$ group with quadratic $\F_p$-cohomology algebra
$H^\bullet(G,\F_p)$, then the algebras $H^\bullet(G,\F_p)$ and $\gr\FppG$ --- the latter being induced by the quotients of consecutive terms
of the $p$-Zassenhaus filtration of $G$ ---
are both Koszul, and they are quadratically dual to each other. 
Consequently, if the maximal pro-$p$ Galois group of a field is mild, then Positselski's and Weigel's Koszulity conjectures
hold true for such a field.
\end{abstract}

\dedicatory{To the memory of Professor Mikhail Shubin \\ with gratitude and admiration}

\subjclass[2010]{Primary 12G05; Secondary 16S37, 20E18, 12F10, 20J06}

\keywords{Mild pro-$p$ groups, Koszul algebras, quadratic algebras, Galois cohomology, maximal pro-$p$ Galois groups}

\thanks{The first-named author is partially supported by the Natural Sciences and Engineering Research Council of Canada, grant R0370A01. The fourth-named author is partially supported by Vietnam’s National Foundation for Science and Technology Development (NAFOSTED) under grant number 101.04-2019.314.}

\maketitle

\section{Introduction}
\label{sec:intro}

Let $\Bbbk$ be a field, and let $A_\bullet=\bigoplus_{n\geq0}A_n$ be a $\Bbbk$-algebra endowed with a $\mathbb{N}$-grading.
The algebra $A_\bullet$ is called a {\sl quadratic algebra } if it is generated by $A_1$
(namely, every element is a sum of products of elements of $A_1$),
and its relations are generated by homogeneous relations of degree 2.
A quadratic algebra $A_\bullet$ comes endowed with a {\sl quadratic dual algebra} $A_\bullet^!$, defined
as the quadratic algebra whose relations are orthogonal to the relations of $A_\bullet$ (see Definition~\ref{defin:quad} below,
and \cite[\S~1.2]{poliposi:book}). 
For example, symmetric algebras and exterior algebras are quadratic, and they are quadratically dual to each other. 

A peculiar class of algebras stands out among quadratic algebras: the class of {\sl Koszul algebras}, introduced by S.~Priddy in \cite{priddy}.
A quadratic algebra $A_\bullet$ is Koszul if the minimal graded free $A_\bullet$-resolution of $\Bbbk$ has only linear maps --- see \S~\ref{ssec:koszul} below.
In spite of the kinky definition, Koszul algebras have an exceptionally nice behavior --- e.g., the $\Bbbk$-cohomology
algebra of a Koszul algebra $A_\bullet$ is its quadratic dual $A_\bullet^!$.
Koszulity is a very restrictive property, still it arises in various areas of mathematics, such as representation
theory, algebraic geometry, combinatorics, noncommutative geometry and topology (see, e.g., \cite{koszul:survey} and \cite[Introduction]{poliposi:book}).
Hence, Koszul algebras have become an important object of study.

Quadratic algebras and Koszul algebras play a prominent role in Galois theory, too.
Given a field $\K$, for $p$ a prime number let $G_{\K}(p)$ denote the {\sl maximal pro-$p$ Galois group} of $\K$.
I.e., $G_{\K}(p)$ is the maximal pro-$p$ quotient of the absolute Galois group of $\K$.
If $\K$ contains a root of 1 of order $p$ (and also $\sqrt{-1}$ if $p=2$), then the celebrated {\sl Bloch-Kato Conjecture} --- which was given an affirmative answer by M.~Rost and V.~Voevodsky, with a ``patch'' by Ch.~Weibel, cf.~\cites{rost,HW:book,weibel:lectures,weibel:norm,voev} --- implies that the {\sl $\F_p$-cohomology algebra}
$H^\bullet(G_{\K}(p),\F_p)=\bigoplus_{n\geq0}H^n(G_{\K}(p),\F_p)$
of the maximal pro-$p$ Galois group of $\K$, endowed with the graded-commutative {\sl cup product}
\[
 H^s(G_{\K}(p),\F_p)\times H^t(G_{\K}(p),\F_p)\overset{\cup}{\longrightarrow} H^{s+t}(G_{\K}(p),\F_p),\qquad s,t\geq0,
\]
is a quadratic $\F_p$-algebra.
This result was a great achievement in Galois theory, as it allowed the discovery of new insights on the structure
of maximal pro-$p$ Galois groups of fields, whose study is crucial in current research in Galois theory (see, e.g., \cites{CEM,cq:bk,CMQ:fast,qw:cyc}).

Given the peculiarity of Koszul algebras among quadratic algebras, the study of Koszul algebras in the context of Galois cohomology has gained considerable importance, in particular after the works by L.~Positselski and A.~Vishik (cf.~\cites{posivis,pos:k,pos:conj,posi:form}).
Their work propose an approach to the Bloch-Kato Conjecture which is alternative to Rost-Voevodsky's proof of the Norm Residue Theorem (see \cite[\S~1]{pos:k}).
In particular, Positselski formulated the following conjecture (cf.~\cite{pos:conj}*{\S~0.1}).

\begin{conj}\label{conj:posi}
Let $\K$ be a field containing a root of 1 of order $p$.
Then the algebra $H^\bullet(G_{\K}(p),\F_p)$ is Koszul. 
\end{conj}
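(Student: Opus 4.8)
The plan is to reduce the Koszulity of $H^\bullet(G_\K(p),\F_p)$ to a purely group-theoretic comparison between two graded $\F_p$-algebras attached to $G:=G_\K(p)$: the cohomology algebra $H^\bullet(G,\F_p)$ itself, and the graded algebra $\gr\FppG$ associated to the $p$-Zassenhaus filtration. Since the conjecture in full generality appears out of reach, I aim to establish it under the additional hypothesis that $G$ is mild; this is the realistic target and it already settles the conjecture for a large and important class of fields. The one input I take for granted is that, under the root-of-unity hypothesis, Bloch--Kato (Rost--Voevodsky) guarantees $H^\bullet(G,\F_p)$ is quadratic, so the task is only to upgrade \emph{quadratic} to \emph{Koszul}.

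First I would set up the two candidate algebras together with the duality I want between them. Writing $G=F/R$ with $F$ free pro-$p$ on a minimal generating set $X$, the algebra $\gr\FppG$ is the quotient of the free algebra $\FpX$ by the ideal generated by the initial forms of a minimal system of relators. When $G$ is mild with quadratic cohomology these initial forms sit in degree $2$, so $\gr\FppG$ is quadratic, and I would identify $H^\bullet(G,\F_p)$ with the quadratic dual $(\gr\FppG)^!$. Concretely, $H^1(G,\F_p)$ is dual to $X$, the space $H^2(G,\F_p)$ is dual to the relation space, and the cup-product pairing is exactly orthogonal to the degree-$2$ multiplication of $\gr\FppG$; the substance of the identification is to show that this degree-$2$ orthogonality propagates to an isomorphism in every degree.

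Next I would exploit mildness. By Labute's theory, mildness means the initial forms of the relators form a strongly free (inert) sequence, and a quadratic algebra presented by a strongly free sequence of quadratic elements has the expected Hilbert series and, crucially, is Koszul. Thus $\gr\FppG$ is Koszul. I would then invoke the fundamental symmetry of Koszulity --- an algebra is Koszul if and only if its quadratic dual is Koszul --- to conclude that $(\gr\FppG)^!\isom H^\bullet(G,\F_p)$ is Koszul as well, and that the two algebras are quadratically dual to one another, as desired.

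The main obstacle, I expect, is precisely the identification $H^\bullet(G,\F_p)\isom(\gr\FppG)^!$ in all degrees. In general one only has the canonical comparison realized by the spectral sequence attached to the augmentation filtration of $\FppG$, whose $E_1$-page computes $\mathrm{Ext}$ over $\gr\FppG$ and which converges to $H^\bullet(G,\F_p)$; a priori this comparison is neither injective nor surjective in high degrees. The crux is to force this spectral sequence to degenerate, and here the quadraticity of $H^\bullet(G,\F_p)$ from Bloch--Kato must be combined with the Koszulity of $\gr\FppG$ extracted from mildness to kill the higher differentials and resolve the extension problems. A second, more fundamental obstacle separates this conditional result from the conjecture itself: one does not know that $G_\K(p)$ is mild for every field $\K$ containing a $p$-th root of unity, so the unconditional statement would demand either a proof of mildness in sufficient generality or an altogether different route to Koszulity.
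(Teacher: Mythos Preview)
Your approach is essentially the paper's: restrict to mild $G$, use mildness to get $\gr\FppG\cong\FpX/\mathcal{R}$ with $\mathcal{R}$ generated by a strongly free sequence in degree~$2$, conclude $\gr\FppG$ is Koszul via the Hilbert-series criterion, and then pass to $H^\bullet(G,\F_p)$ by quadratic duality. The paper does not attempt the unconditional conjecture either.

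One correction: the identification $H^\bullet(G,\F_p)\cong(\gr\FppG)^!$ that you flag as the main obstacle is actually free, and no spectral sequence is needed. A quadratic algebra is completely determined by its presentation in degrees~$1$ and~$2$. You already have that $H^\bullet(G,\F_p)$ is quadratic (Bloch--Kato), and mildness gives $\gr\FppG\cong\FpX/\mathcal{R}$ with $\mathcal{R}$ generated in degree~$2$, so $\gr\FppG$ is quadratic as well. The degree-$1$ and degree-$2$ orthogonality you describe (generators dual to $H^1$, relation space dual to $H^2$ via the cup-product pairing) is therefore the \emph{entire} content of the isomorphism $H^\bullet(G,\F_p)\cong(\FpX/\mathcal{R})^!$, by the very definition of quadratic dual; there is nothing to propagate to higher degrees and no extension problem to solve. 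The paper simply invokes this as a known fact (its Proposition~4.2, from \cite{MPQT}) rather than treating it as a difficulty.
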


Since the $\F_p$-cohomology algebra of the maximal pro-$p$ Galois group $G_{\K}(p)$ of a field $\K$ containing a root of 1 of order $p$ is quadratic, a further natural question one may formulate is: ``what is the quadratic dual of $H^\bullet(G_{\K}(p),\F_p)$ like?''
The conjectural description of such quadratic algebra is related to the successive factors of the {\sl $p$-Zassenhaus filtration} of $G_{\K}(p)$ (see Definition~\ref{defin:zassenhaus} below).
For a pro-$p$ group $G$, let $\calU_{\Lie(G)}$ denote the restricted universal enveloping algebra of the restricted
Lie algebra $\Lie(G)$ induced by the $p$-Zassenhaus filtration of $G$. 
One has the following conjectures, formulated respectively by the authors in \cite{MPQT}, and by Th.~Weigel in \cite{thomas:koszul}.

\begin{conj}\label{ques:koszul}
 Let $\K$ be a field containing a root of 1 of order $p$.
\begin{itemize}
 \item[(i)] The algebra $\calU_{\Lie(G_{\K}(p))}$ is quadratic, and it is isomorphic to the quadratic dual of $H^\bullet(G_{\K}(p),\F_p)$.
 \item[(ii)] The algebra $\calU_{\Lie(G_{\K}(p))}$ is Koszul.
\end{itemize}
\end{conj}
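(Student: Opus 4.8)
The plan is to establish both conjectures under the extra hypothesis that the paper's abstract isolates — namely that $G=G_{\K}(p)$ is a \emph{mild} pro-$p$ group with quadratic $H^\bullet(G,\F_p)$ — and to funnel everything through the single associated graded algebra $\gr\FppG$ attached to the $p$-Zassenhaus filtration. The guiding idea is that both Koszulity and the quadratic duality should fall out of the \emph{strong freeness} (inertness) of the defining relations that mildness provides, so the two conjectures get proved in one stroke.

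First I would recall the fundamental identification $\calU_{\Lie(G)}\isom\gr\FppG$, matching the restricted universal enveloping algebra of the Zassenhaus restricted Lie algebra with the associated graded of the completed group algebra; this already reduces part (ii) and the algebra named in part (i) to concrete questions about $\gr\FppG$. Next, starting from a minimal presentation of $G$ by a free pro-$p$ group $F$ on $d=\dim_{\F_p}H^1(G,\F_p)$ generators with relation set $R$, I would present $\gr\FppG\isom\FpX/(\bar r_1,\dots,\bar r_m)$, where $\bar r_1,\dots,\bar r_m$ are the initial forms of a minimal system of $m=\dim_{\F_p}H^2(G,\F_p)$ relations. The hypothesis that $H^\bullet(G,\F_p)$ is quadratic forces these initial forms to sit in degree $2$, so $\gr\FppG$ is a quadratic algebra generated in degree $1$.

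The crux is the third step, where mildness enters. By Labute's theory, the relations of a mild pro-$p$ group form a strongly free sequence, so $\gr\FppG$ has Hilbert series exactly $(1-dt+mt^2)^{-1}$. I would then have to argue that a quadratic algebra defined by a strongly free quadratic sequence is Koszul — and this is the main obstacle, since the matching Hilbert series is only a \emph{necessary} condition for Koszulity and must be upgraded to a genuine structural statement. Concretely, I expect to do this by showing the relations form a quadratic noncommutative Gröbner basis (equivalently, that the Anick obstruction set of a strongly free quadratic sequence sits in degree $2$), so that the Anick resolution of $\F_p$ over $\gr\FppG$ becomes minimal and linear; Koszulity then follows from the standard criterion that a quadratic Gröbner basis yields a Koszul algebra. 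This gives part (ii).

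Finally, for the duality in part (i), I would identify the quadratic dual $(\gr\FppG)^!$ with $H^\bullet(G,\F_p)$. In degree $1$ one has $H^1(G,\F_p)=(\gr_1\FppG)^*$, and in degree $2$ the defining relations of $H^\bullet(G,\F_p)$ — the kernel of the cup product $H^1\otimes H^1\to H^2$ — are precisely the orthogonal complement of the span of the initial forms $\bar r_1,\dots,\bar r_m$, which is exactly the relation space of the quadratic dual. Since $\gr\FppG$ is Koszul, its quadratic dual is Koszul as well and the duality is honest; hence $\calU_{\Lie(G)}\isom\gr\FppG\isom(H^\bullet(G,\F_p))^!$, both algebras are Koszul, and they are quadratically dual to one another. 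This is the full assertion of the abstract, and it yields Conjectures~\ref{conj:posi} and~\ref{ques:koszul} whenever $G_{\K}(p)$ is mild.
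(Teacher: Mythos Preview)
Your overall architecture matches the paper's: reduce to $\gr\FppG$ via Jennings' theorem, use mildness to get the presentation $\gr\FppG\cong\FpX/(\bar r_1,\dots,\bar r_m)$ with quadratic initial forms, read off the Hilbert series $(1-dz+mz^2)^{-1}$ from strong freeness, and identify $H^\bullet(G,\F_p)$ with the quadratic dual via the orthogonality between cup-product relations and initial forms. The duality step and the passage to Koszulity of $H^\bullet(G,\F_p)$ are exactly as in the paper.

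The one substantive divergence is your ``crux'' step. You assert that the Hilbert series identity is ``only a necessary condition for Koszulity'' and propose to upgrade it via a quadratic Gr\"obner basis / Anick-resolution argument. This is both unnecessary and risky. It is unnecessary because there is a direct sufficient criterion (Polishchuk--Positselski, Ch.~2, Cor.~2.2, recorded in the paper as Proposition~2.11(i)): a quadratic algebra on $d$ generators with $m$ relations whose Hilbert series equals $(1-dz+mz^2)^{-1}$ is automatically Koszul. The paper's proof of Theorem~4.4 is literally one line invoking this. It is risky because your proposed route asks for more than Koszulity: having a quadratic Gr\"obner basis is the PBW property, which is strictly stronger than Koszul, and there is no reason an arbitrary strongly free quadratic sequence should be a Gr\"obner basis in some monomial order --- Anick's criterion runs the other way (combinatorially free leading terms $\Rightarrow$ strongly free), and the converse fails in general. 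So your detour may simply not close.

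In short: drop the Gr\"obner-basis paragraph and replace it with the Hilbert-series Koszulity criterion; then your argument coincides with the paper's.
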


Note that, by a result of S.~Jennings, for a finitely generated pro-$p$ group $G$, the algebra $\calU_{\Lie(G)}$ is 
isomorphic to the graded algebra $\gr\F_p[G]$ associated to the filtration on the $\F_p$-group algebra of $G$
by successive powers of the augmentation ideal (see Proposition~\ref{prop:grFG}).

In \cite{pos:conj} it is shown that Conjecture~\ref{conj:posi} holds true if $\K$ is a local field or a global field.
Furthermore, in \cite{MPQT} and in \cite{cq:onerel}, Conjectures~\ref{conj:posi} and \ref{ques:koszul} receive positive answers
in several relevant cases: for example, in case $G_{\K}(p)$ is a {\sl Demushkin group} (cf.~\cite{MPQT}*{Thm.~C}); or if $G$ is finitely generated and it has a single {\sl defining relation}, and $H^\bullet(G_{\K}(p),\F_p)$ is quadratic (cf.~\cite{cq:onerel}*{Cor.~1.3}).

In this paper, we turn our focus on {\sl mild pro-$p$ groups}.
{ Mild pro-$p$ groups --- defined by J.~Labute in \cite{labute:mild} --- are finitely generated pro-$p$ groups which admit a presentation in terms of generators and defining relations, where the relations satisfy a condition of being free in a maximal possible way with respect to suitably chosen filtrations, such as the $p$-Zassenhaus filtration.
Mild pro-$p$ groups enjoy very nice properties: for example, a mild pro-$p$ group $G$ has cohomological dimension $2$ --- i.e.,  $H^n(G,\F_p)=0$ for $n\geq3$ ---, and the associated algebra $\calU_{\Lie(G)}$ may be computed easily.
For this reasons, from a group-theoretic point of view mild pro-$p$ groups may be seen as a generalization of Demushkin groups
and one-relator pro-$p$ groups.
 }

As underlined in \cite[\S~1.2]{labute:mild}, mild pro-$p$ groups arise as Galois groups
of extensions of number fields with restricted ramification { (see also, e.g., \cite{salle:mild,maire}) --- which are very important objects in algebraic number theory, and before Labute's work very little was known about the structure of these Galois pro-$p$ groups.
Most remarkably, mild pro-$p$ groups provide examples of Galois pro-$p$ groups of number fields with cohomological dimension 2, a property which is very interesting as it yields a lot of arithmetic consequences --- see, e.g., \cite{labute:algebres,koch:2,schmidt:circ,forre}.}

Our main result is the following.

\begin{thm}\label{thm:main intro}
Let $G$ be a mild pro-$p$ group.
If the $\F_p$-cohomology algebra $H^\bullet(G,\F_p)$ is quadratic, then the algebras $H^\bullet(G,\F_p)$
and $\calU_{\Lie(G)}$ are Koszul, and moreover one has an isomorphism of quadratic algebras
\[
 H^\bullet(G,\F_p)^!\cong \calU_{\Lie(G)}.\]
\end{thm}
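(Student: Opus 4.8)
The plan is to move everything to the enveloping algebra $\calU_{\Lie(G)}$, which by Proposition~\ref{prop:grFG} equals $\gr\FppG$, and to exploit that Koszulity is a self-dual property: a quadratic algebra is Koszul if and only if its quadratic dual is. Writing $A^\bullet:=H^\bullet(G,\F_p)$ and $B_\bullet:=\gr\FppG\cong\calU_{\Lie(G)}$, it then suffices to (i) show that $B_\bullet$ is quadratic and that $B_\bullet\cong A^{\bullet!}$, and (ii) prove that $B_\bullet$ is Koszul. Indeed, once these are in hand, $A^\bullet=(A^{\bullet!})^!=B_\bullet^!$ is Koszul by self-duality, and the isomorphism $A^{\bullet!}\cong\calU_{\Lie(G)}$ is immediate.

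First I would fix a minimal presentation $1\to R\to F\to G\to1$ with $F$ free pro-$p$ on $d=\dim_{\F_p}H^1(G,\F_p)$ generators and $R$ minimally generated by $m=\dim_{\F_p}H^2(G,\F_p)$ relations $r_1,\dots,r_m$. Since $\gr\F_p\dbl F\dbr$ is the free associative algebra $\FpX$ on $X_1,\dots,X_d$, each $r_j$ has a homogeneous initial form $\rho_j$ of some degree $e_j\geq2$, and mildness means precisely that $\rho_1,\dots,\rho_m$ is a strongly free sequence; Labute's theorem then gives $B_\bullet=\FpX/(\rho_1,\dots,\rho_m)$ with Hilbert series $(1-dt+\sum_j t^{e_j})^{-1}$ and $\cd(G)=2$. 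Next I would bring in the hypothesis that $A^\bullet$ is quadratic. Being generated in degree $1$ forces the cup product $H^1(G,\F_p)\otimes H^1(G,\F_p)\to H^2(G,\F_p)$ to be surjective, i.e. $H^2$ decomposable. Under the standard duality identifying the transpose of the cup product with the map carrying each relation to its degree-$2$ initial form, surjectivity of $\cup$ becomes injectivity of that map, forcing $e_j=2$ for all $j$. Hence every $\rho_j$ is quadratic and $B_\bullet$ is a quadratic algebra with Hilbert series $(1-dt+mt^2)^{-1}$.

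With all relations in degree $2$, Koszulity of $B_\bullet$ can be read off from strong freeness. By Anick's characterization, a strongly free sequence produces a minimal graded free resolution of the trivial module of length two,
\[
 0\longrightarrow\bigoplus_{j=1}^{m}B_\bullet(-e_j)\longrightarrow B_\bullet(-1)^{d}\longrightarrow B_\bullet\longrightarrow\F_p\longrightarrow0,
\]
in which $B_\bullet(-1)^{d}\to B_\bullet$ is multiplication by the generators $X_i$ and $\bigoplus_j B_\bullet(-e_j)\to B_\bullet(-1)^{d}$ records the relations $\rho_j$. Since every $e_j=2$, all differentials are linear, which is exactly the defining condition for $B_\bullet$ to be Koszul.

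It remains to identify $B_\bullet$ with $A^{\bullet!}$. As $A^\bullet$ is quadratic and, because $\cd(G)=2$, concentrated in degrees $0,1,2$, its space of quadratic relations is $W=\kernel(\cup)\subseteq V\otimes V$, where $V=H^1(G,\F_p)$; thus $A^{\bullet!}=T(V^*)/(W^{\perp})$. The same cup-product duality identifies $W^{\perp}=\image(\cup^*)$ with the span of the initial forms $\rho_j$ inside $V^*\otimes V^*=(\FpX)_2$, so that $A^{\bullet!}=\FpX/(\rho_1,\dots,\rho_m)=B_\bullet$. Together with the self-duality of Koszulity and $(A^{\bullet!})^!=A^\bullet$, this settles all three assertions. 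I expect the real obstacle to be the sign-correct verification of the duality between the cup product on $H^1(G,\F_p)$ and the passage from a defining relation to its quadratic initial form: this Lazard-type pairing between $H^2(G,\F_p)$ and the relation module is what simultaneously drives the reduction to $e_j=2$ and the identification $A^{\bullet!}\cong B_\bullet$, and it has to be arranged compatibly with the strong-freeness input from Labute's theory.
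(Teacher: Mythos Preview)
Your argument is correct and parallels the paper's proof closely. The overall architecture is identical: (a) use the quadraticity of $H^\bullet(G)$ to force all initial forms $\rho_j$ to have degree~$2$ (the paper packages this as Proposition~\ref{prop:H2}); (b) identify $\gr\FppG$ with $\FpX/(\rho_1,\dots,\rho_m)$ via Labute/G\"artner (Proposition~\ref{prop:mild}); (c) identify $H^\bullet(G)^!$ with this same quotient via the cup-product/initial-form duality (the paper cites this as Proposition~\ref{prop:HG}, which is \cite{MPQT}*{Thm.~F}, whereas you unwind it by hand); (d) deduce Koszulity of one side and transfer it by self-duality.

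The one genuine difference is in step~(d). The paper observes that mildness together with $e_j=2$ for all $j$ forces $h_{\gr\FppG}(z)=(1-dz+mz^2)^{-1}$ and then invokes the numerical criterion of Proposition~\ref{prop:koszul A3}(i): a quadratic algebra on $d$ generators and $m$ relations whose Hilbert series already equals $(1-dz+mz^2)^{-1}$ is automatically Koszul. You instead appeal to Anick's characterisation of strong freeness, exhibiting the length-two free resolution $0\to\bigoplus_j B_\bullet(-e_j)\to B_\bullet(-1)^{d}\to B_\bullet\to\F_p\to0$ and noting that $e_j=2$ makes it linear. Both routes are valid; the Hilbert-series argument is shorter and sidesteps having to justify that strong freeness really yields an \emph{exact} (not merely Euler-characteristic-correct) complex, while your route has the advantage of making the Koszul resolution of $\F_p$ explicit rather than relying on a black-box criterion.
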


As a consequence, Conjectures~\ref{conj:posi} and \ref{ques:koszul} have positive answers
in the case the maximal pro-$p$ Galois group $G_{\K}(p)$ of a field $\K$ containing a root of 1 of order $p$ is mild.
Moreover, Theorem~\ref{thm:main intro} provides a big wealth of examples of finitely generated pro-$p$ groups $G$
such that the algebras $H^\bullet(G,\F_p)$ and $\calU_{\Lie(G)}$ are Koszul, and quadratically dual to each other 
(see \S~\ref{sec:examples mild}).
In addition, we prove the following cohomological characterization of mild pro-$p$ groups with quadratic cohomology.

\begin{prop}\label{prop:mild cohom intro}
 Let $G$ be a finitely generated pro-$p$ group of cohomological dimension $2$ such that the $\F_p$-cohomology algebra $H^\bullet(G,\F_p)$ is Koszul.
 Then $G$ is a mild pro-$p$ group.
\end{prop}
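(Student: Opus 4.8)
The plan is to reduce the statement to Labute's combinatorial characterization of mildness in terms of strongly free sequences of initial forms, and to use the Koszulity of $H^\bullet(G,\F_p)$ solely to force a Hilbert series.

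First I would fix a minimal presentation $1\to R\to F\to G\to 1$, where $F$ is the free pro-$p$ group on $d=\dim_{\F_p}H^1(G,\F_p)$ generators and $\rho_1,\dots,\rho_m$ is a minimal set of defining relations. Since $G$ is finitely generated and $H^\bullet(G,\F_p)$ is quadratic (every Koszul algebra is quadratic by definition), the degree-$2$ component of $A:=H^\bullet(G,\F_p)$ is a quotient of $H^1\otimes H^1$, so $m=\dim_{\F_p}H^2(G,\F_p)$ is finite and equals the minimal number of relations. Because $\cd(G)=2$, the algebra $A$ is concentrated in degrees $0,1,2$, and hence its Hilbert series is the polynomial
\[
 H_A(t)=1+dt+mt^2.
\]
Applying the Koszul duality relation for Hilbert series $H_A(t)\,H_{A^!}(-t)=1$ (see \cite{poliposi:book}), I obtain $H_{A^!}(t)=H_A(-t)^{-1}=(1-dt+mt^2)^{-1}$.

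The heart of the argument is to identify the quadratic dual $A^!$ with the quadratic algebra cut out by the initial forms of the relations. Under the Magnus--Jennings isomorphism $\gr\F_p[F]\cong\FpX$ (Proposition~\ref{prop:grFG}), each $\rho_i$ has an initial form $\bar\rho_i\in\FpX$, and quadraticity of $A$ forces every $\bar\rho_i$ to be homogeneous of degree exactly $2$. The classical duality between the cup product and the quadratic part of the relations---the transpose of $H^1\otimes H^1\overset{\cup}{\longrightarrow}H^2$ being the map sending a relation to its degree-$2$ initial form---then shows that the space of relations of $A^!$, namely the orthogonal complement of $\ker(\cup)$, is precisely the span of $\bar\rho_1,\dots,\bar\rho_m$. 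This yields an isomorphism of quadratic algebras $A^!\cong\FpX/(\bar\rho_1,\dots,\bar\rho_m)$.

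Combining the two computations, the algebra $\FpX/(\bar\rho_1,\dots,\bar\rho_m)$---a quotient of the free associative $\F_p$-algebra on $d$ generators by $m$ homogeneous quadratic relations---attains the Hilbert series $(1-dt+mt^2)^{-1}$. By the classical criterion of Anick, a sequence of homogeneous elements of $\FpX$ is strongly free exactly when the corresponding quotient realises this extremal Hilbert series; hence $\bar\rho_1,\dots,\bar\rho_m$ is a strongly free sequence. By Labute's definition \cite{labute:mild}, a finitely generated pro-$p$ group whose defining relations have strongly free initial forms is mild, so $G$ is mild.

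I expect the main obstacle to be the identification $A^!\cong\FpX/(\bar\rho_1,\dots,\bar\rho_m)$ of the previous paragraph: it requires simultaneously that quadraticity of $H^\bullet(G,\F_p)$ be equivalent to the defining relations being quadratically defined (all initial forms of degree $2$), and that, under the minimality of the presentation, the transpose of the cup product be exactly the ``take the quadratic initial form'' map, so that the $m$ forms $\bar\rho_i$ are linearly independent and span the relation space of $A^!$. Once this dictionary between Galois cohomology and initial forms is in place, the remaining steps are purely a matter of matching Hilbert series.
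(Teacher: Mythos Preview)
Your proposal is correct and follows essentially the same route as the paper's proof: pass from Koszulity to quadraticity so that the initial forms of a minimal set of relations are homogeneous of degree~$2$, invoke the identification $H^\bullet(G)^!\cong\FpX/(\bar\rho_1,\dots,\bar\rho_m)$ (the paper's Proposition~\ref{prop:HG}), and then use the Koszul Hilbert-series identity together with $\cd(G)=2$ to force $h_{\FpX/(\bar\rho_1,\dots,\bar\rho_m)}(t)=(1-dt+mt^2)^{-1}$, which is precisely the paper's \emph{definition} of a strongly free sequence (rather than a separate criterion of Anick). The obstacle you flag---the duality between the cup product and the degree-$2$ initial forms---is exactly what the paper packages into Propositions~\ref{prop:H2} and~\ref{prop:HG}, citing \cite{MPQT}.
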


This raises the following natural Galois-theoretic question.

\begin{question}
Let $p$ be a prime, and let $\K$ be a field such that:
\begin{itemize}
 \item[(i)] $\K$ contains a root of 1 of order $p$, and also $\sqrt{-1}$ if $p=2$;
 \item[(ii)] the maximal pro-$p$ Galois group $G_{\K}(p)$ is a finitely generated pro-$p$ group;
 \item[(iii)] the cohomological dimension of $G_{\K}(p)$ is 2.
\end{itemize}
Is $G_{\K}(p)$ a mild pro-$p$ group?
\end{question}

{ In order to decide when a given pro-$p$ group is mild, there are some criteria which provide sufficient conditions for mildness: for example, Anick's criterion (see \cite{anick} and \cite[Prop.~3.7]{labute:mild}), Labute's non-singular circuits criterion (see \cite[Thm.~1.6]{labute:mild}), and Schmidt's cup-product criterion (see \cite[Thm.~1.1]{Schmidt} and \S~\ref{ssec:cupproductcrit}).
Similarly, there are some methods to prove Koszulity of a quadratic algebra (see, e.g., \cite[Ch.~4]{lodval:book}).
We establish the following new criterion to decide when a given quadratic algebra is Koszul, which is inspired by the aforementioned cup-product criterion for mildness.

\begin{thm}\label{thm:Koszulcrit}
  Let $A_\bullet$ be a quadratic algebra over an arbitrary field $\Bbbk$, and suppose that the component $A_1$ admits a decomposition as direct sum of vector spaces $A_1=U\oplus W$ such that:
 \begin{itemize}
  \item[(i)] $U\cdot W:=\mathrm{Span}_{\F}\{\:u\cdot w\:\mid\:u\in U,\:w\in W\:\}=A_2$;
  \item[(ii)] $U\cdot U:=\mathrm{Span}_{\F}\{\:u\cdot u'\:\mid\:u,u'\in U\:\}=0$.
 \end{itemize}
Then the algebra $A_\bullet$ is Koszul.
\end{thm}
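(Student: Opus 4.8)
The plan is to prove Koszulity by producing a quadratic Gröbner basis (equivalently, a PBW basis) for $A_\bullet$, using the decomposition $A_1=U\oplus W$ to pin down the leading monomials of the relations. The observation that makes everything collapse is that conditions (i) and (ii) force $A_\bullet$ to be concentrated in degrees $\leq 2$. Indeed $A_3=A_1\cdot A_2=(U\oplus W)\cdot(U\cdot W)$, and here $U\cdot(U\cdot W)=(U\cdot U)\cdot W=0$ by (ii), while $W\cdot(U\cdot W)=(W\cdot U)\cdot W\subseteq (U\cdot W)\cdot W=U\cdot(W\cdot W)\subseteq U\cdot(U\cdot W)=(U\cdot U)\cdot W=0$, where I used (i) to absorb both $W\cdot U$ and $W\cdot W$ into $U\cdot W$ and then (ii) again. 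Hence $A_n=0$ for all $n\geq 3$.

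Next I fix bases $u_1,\dots,u_s$ of $U$ and $w_1,\dots,w_t$ of $W$ and order the monomials of the free algebra on $A_1$ by the degree-lexicographic order attached to $w_1>\cdots>w_t>u_1>\cdots>u_s$, i.e.\ every $W$-generator is larger than every $U$-generator. With this order the relations acquire a rigid shape: condition (ii) contributes the monomial relations $u_iu_j$, with leading terms $u_iu_j$; condition (i) lets me rewrite each $w_iu_j$ and each $w_iw_j$ as a combination of the $u_aw_b$, producing relations with leading terms $w_iu_j$ and $w_iw_j$; and the linear dependencies among the $u_aw_b$ inside $A_2$ — that is, a reduced generating set of $\ker(U\otimes W\to A_2)$ — contribute relations whose leading terms are certain $u_aw_b$. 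Thus every quadratic monomial except the "surviving'' $u_aw_b$ is a leading monomial, and the standard (non-reducible) monomials are exactly those avoiding all of the subwords $u_iu_j$, $w_iu_j$, $w_iw_j$.

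The core of the argument is to verify that these quadratic relations already form a Gröbner basis, i.e.\ that the standard monomials are a $\Bbbk$-basis of $A_\bullet$. A word is standard only if each consecutive pair is a surviving $u_aw_b$, and a one-line combinatorial check shows that no word of length $\geq 3$ can satisfy this, so the standard monomials sit only in degrees $0,1,2$. They always span $A_\bullet$, so it remains to check linear independence degree by degree. Degrees $0$ and $1$ are immediate; in degree $2$ the surviving $u_aw_b$ are independent in $A_2$ precisely because their leading monomials were chosen to avoid those of a reduced generating set of $\ker(U\otimes W\to A_2)$; and in degrees $\geq 3$ independence is vacuous, since by the first step both the set of standard monomials and the space $A_n$ are zero. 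Matching the spanning set against the dimension in each degree shows the standard monomials form a basis, so the chosen quadratic relations are a Gröbner basis and $A_\bullet$ is a PBW algebra; the standard fact that a quadratic algebra with a quadratic Gröbner basis is Koszul (see \cite[Ch.~4]{lodval:book}) then concludes the proof.

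I expect the only genuinely delicate point to be the completeness of the Gröbner basis: a priori one must resolve every degree-$3$ overlap ambiguity among the leading monomials $u_iu_j$, $w_iu_j$, $w_iw_j$. The key simplification is furnished by the first step, $A_{\geq 3}=0$: every degree-$3$ element of $A_\bullet$ already vanishes, so every reduction of a degree-$3$ monomial necessarily terminates at $0$ and all critical pairs resolve automatically. In other words, the structural vanishing in degrees $\geq 3$ is exactly what supplies the confluence needed for Koszulity, which is why conditions (i) and (ii) suffice.
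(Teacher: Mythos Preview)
Your proof is correct and follows essentially the same approach as the paper: both set up the same ordering (every $W$-generator above every $U$-generator), extract the same four families of normalized quadratic relations, and conclude Koszulity via the PBW/rewriting criterion of \cite[Ch.~4]{lodval:book}. The only difference is the order of operations in verifying confluence. The paper checks it head-on, listing all eight types of degree-$3$ critical monomials and drawing the rewriting graphs to watch each collapse to $0$ through an eventual $u_iu_j$ factor; it then deduces $A_3=0$ as a by-product. You instead compute $A_{\geq 3}=0$ directly from (i)--(ii) at the outset and use it to short-circuit the diagram chase: since no standard monomial survives in degree $\geq 3$, every reduction of a degree-$3$ word must terminate at $0$, so all overlap ambiguities resolve automatically. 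This is a clean repackaging of the same mechanism rather than a different argument.
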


}


\subsection*{Acknowledgements}
{\small The authors are deeply indebted with John~P.~Labute, as their way of thinking about pro-$p$ groups with Koszul $\F_p$-cohomology has been toroughly influenced by his works.
Also, they wish to thank Thomas~S.~Weigel, for the interesting discussions on Galois cohomology.

\noindent
The first-named author gratefully acknowledges the Faculty of Science Distinguished Research Professorship, Western University, for the academic year 2020/2021.
The fourth-named author gratefully acknowledges the Vietnam Institute for Advanced Study in Mathematics (VIASM) for hospitality and support during a visit in 2021.

\noindent
 Last, but not least, we acknowledge gratefully encouraging referee's comments and his/her valuable suggestions concerning the exposition.
}


\section{Quadratic algebras and Koszul algebras}
\label{sec:quad}

Throughout the paper every graded algebra 
$A_\bullet=\bigoplus_{n\in\Z}A_n$ is assumed to be
unitary associative over {a field $\Bbbk$}, non-negatively graded, connected, and of finite-type.
I.e., $A_0=\Bbbk$, $A_n=0$ for $n<0$ and $\dim(A_n)<\infty$ for $n\geq1$.
{From \S~\ref{sec:gps} onward, we will assume $\Bbbk=\F_p$.}

For a complete account on graded algebras and their cohomology, we direct the reader to \cites{poliposi:book,lodval:book,koszul:survey}, and to \cite{MPQT}*{\S~2}.


\subsection{Quadratic algebras and quadratic duals}
\label{ssec:quad}

For a vector space $V$ of finite dimension, let $\bfT_\bullet(V)=\bigoplus_{n\geq0}V^{\otimes n}$ denote
the graded tensor algebra generated by $V$ (where $V^{\otimes0}=\Bbbk$), endowed with the multiplication and the grading
induced by the tensor product.
Also, let $V^*=\Hom_{\F_p}(V,\Bbbk)$ denote the dual space of $V$.
Since $\dim (V)<\infty$, we will identify $(V\otimes V)^*=V^*\otimes V^*$.

\begin{defin}\label{defin:quad}\rm
\begin{itemize}
 \item[(i)] A graded algebra $A_\bullet$ is said to be {\sl quadratic} if 
\[
 A_\bullet\cong \bfT_\bullet(A_1)/(\Omega)
\]
for some { vector subspace} $\Omega\subseteq V\otimes V$, with $(\Omega)\trianglelefteq \bfT_\bullet(A_1)$
the two-sided ideal generated by $\Omega$.
We write $A_\bullet=\bfQ_\bullet(V,\Omega)$, with $V=A_1$. 
\item[(ii)] For a quadratic algebra $A_\bullet=\bfQ_\bullet(V,\Omega)$, let $\Omega^\perp\subseteq V^*\otimes V^*$ denote the orthogonal space
of $\Omega$, i.e., {identifying $(V\otimes V)^*=V^*\otimes V^*$,}
$$\Omega^\perp=\left\{\:\alpha\in(V\otimes V)^*\:\mid\: \alpha(w)=0\text{ for all }w\in \Omega\:\right\}.$$
The {\sl quadratic dual} of $A_\bullet$, denoted by $A_\bullet^!$, is the algebra
$\bfQ_\bullet(V^*,\Omega^\perp)$.
\end{itemize}
\end{defin}

\begin{rem}\label{rem:dual}\rm
For a quadratic algebra $A_\bullet$ one has $(A_\bullet^!)^!=A_\bullet$ (cf.~\cite{lodval:book}*{\S~3.2.4}). 
\end{rem}

Let $\Bbbk\langle X\rangle$ denote the algebra of non-commutative polynomials {with coefficients in $\Bbbk$} on the set of indeterminates $X=\{X_1,\ldots,X_d\}$.
Then $\Bbbk\langle X\rangle$ is a graded algebra, with the grading induced by polynomial degree.
The identification of $X$ with a fixed basis of a vector space $V$ of dimension $d$ yields an isomorphism
of graded algebras $\Bbbk\langle X\rangle\cong \bfT_\bullet(V)$.
Thus, one may identify a quadratic algebra with the quotient of $\Bbbk\langle X\rangle$ by an ideal generated by homogeneous {(non-commutative)} polynomials of degree 2.

\begin{exam}\label{ex:quadratic}\rm
Let $V$ be a finite-dimensional vector space.
\begin{itemize}
 \item[(a)] The tensor algebra $\bfT_\bullet(V)=\bfT_\bullet(V)/(0)$ and the {\sl trivial quadratic algebra} $\bfQ_\bullet(V,V^{\otimes2})=\bfT_\bullet(V)/(V^{\otimes2})$
are quadratic, and $\bfT_\bullet(V)^!\cong \bfQ_\bullet(V,V^{\otimes2})$.
\item[(b)] The symmetric algebra $\bfS_\bullet(V)$ and the exterior algebra $\bfLambda_\bullet(V)$ are quadratic, as
\[ \begin{split}
     \bfS_\bullet(V)&=\frac{\bfT_\bullet(V)}{(\:v\otimes w-w\otimes v\:\mid\:v,w\in V\:)},\\
\bfLambda_\bullet(V)&=\frac{\bfT_\bullet(V)}{(\:v\otimes v\:\mid\:v\in V\:)};
   \end{split}\]
 moreover, $\bfS_\bullet(V)^!\cong\bfLambda_\bullet(V)$.
 \end{itemize}
 \end{exam}
 
One may combine two quadratic algebras $A_\bullet=\bfQ_\bullet(A_1,\Omega_A)$ and $B_\bullet=\bfQ_\bullet(B_1,\Omega_B)$ as follows:
\begin{itemize}
 \item[(a)]  the {\sl direct sum} of $A_\bullet$ and $B_\bullet$ is the quadratic algebra
\[
 A_\bullet\oplus B_\bullet=\bfQ_\bullet(A_1\oplus B_1,\Omega_A\cup\Omega_B\cup A_1\otimes B_1\cup B_1\otimes A_1);
\]
 \item[(b)]  the {\sl free product} of $A_\bullet$ and $B_\bullet$ is the quadratic algebra
\[
 A_\bullet\sqcup B_\bullet=\bfQ_\bullet(A_1\oplus B_1,\Omega_A\cup\Omega_B);
\]
 \item[(c)]  the {\sl symmetric tensor product} of $A_\bullet$ and $B_\bullet$ is the quadratic algebra
\[
 A_\bullet\otimes B_\bullet=\bfQ_\bullet(A_1\oplus B_1,\Omega_A\cup\Omega_B\cup \Omega_{\mathrm{S}}),
\]
where $\Omega_{\mathrm{S}}=\langle a\otimes b-b\otimes a,a\in A_1,b\in B_1\rangle$.
 \item[(d)]  the {\sl wedge product} of $A_\bullet$ and $B_\bullet$ is the quadratic algebra
\[
 A_\bullet\wedge B_\bullet=\bfQ_\bullet(A_1\oplus B_1,\Omega_A\cup\Omega_B\cup\Omega_\Lambda),
\]
where $\Omega_{\Lambda}=\langle a\otimes b+b\otimes a,a\in A_1,b\in B_1\rangle$.
\end{itemize}
One has the isomorphisms of quadratic algebras
\begin{equation}\label{eq:quaddual operations}
\begin{split}
&A_\bullet^!\oplus B_\bullet^!\cong(A_\bullet\sqcup B_\bullet)^! \qquad\text{and}\qquad
A_\bullet^!\sqcup B_\bullet^!\cong(A_\bullet\oplus B_\bullet)^!,\\
& A_\bullet^!\otimes B_\bullet^!\cong(A_\bullet\wedge B_\bullet)^! \qquad\text{and}\qquad
A_\bullet^!\wedge B_\bullet^!\cong(A_\bullet\otimes B_\bullet)^!
\end{split}
\end{equation}
(cf.~\cite{poliposi:book}*{\S~3.1}).

\begin{exam}\label{ex:RAAGs}\rm
Let $\Gamma=(\euV,\euE)$ be a finite {simplicial} graph, with vertices 
$\euV=\{v_1,\ldots,v_d\}$ and edges $\euE$ (for the notion of graph we refer to \cite{diestel}*{\S~1.1}), and let $V$ be the vector space {(over an arbitrary field)} with basis $\euV$.
The {\sl right-angled Artin algebra} $\bfA_\bullet(\Gamma)$ and the {\sl exterior Stanley-Reisner algebra} $\bfLambda_\bullet(\Gamma)$
associated to $\Gamma$ are the quadratic algebras
\[ \begin{split}
  \bfA_\bullet(\Gamma) &=\frac{\bfT_\bullet(V)}{(\:v\otimes w-w\otimes v\:\mid\: \{v,w\}\in \euE\:)},\\
 \bfLambda_\bullet(\Gamma) &= \frac{\bfLambda_\bullet(V)}{(\:v\otimes w\:\mid\: \{v,w\}\notin \euE\:)}.
 \end{split}\]
Then $\bfA_\bullet(\Gamma)^!\cong \bfLambda_\bullet(\Gamma)$ (cf.~\cite{thomas:graded}*{\S~4.2.2}).
In particular,
\begin{itemize}
 \item[(a)] if $\Gamma$ is complete (i.e., $\euE=\{\{v,w\}\mid v,w\in\euV{,v\neq w}\}$) then $\bfA_\bullet(\Gamma)\cong \bfS_\bullet(V)$ and
$\bfLambda_\bullet(\Gamma)\cong\bfLambda_\bullet(V)$;
 \item[(b)] if $\euE=\varnothing$, then $\bfA_\bullet(\Gamma)\cong \bfT_\bullet(V)$
and $\bfLambda_\bullet(\Gamma)\cong \bfQ_\bullet(V,V^{\otimes2})$.
\end{itemize}
\end{exam}


\subsection{Koszul algebras}
\label{ssec:koszul}
A quadratic algebra $A_\bullet$ is said to be {\sl Koszul} if {the trivial module $\Bbbk$ admits a free resolution}
\[
 \xymatrix{ \cdots\ar[r] & P(2)_\bullet\ar[r] & P(1)_\bullet\ar[r] & P(0)_\bullet \ar[r] & \F_p }
\]
of {graded} right $A_\bullet$-modules {$P(i)_\bullet=\bigoplus_{n\geq0}P(i)_n$, such that, for each $n\in\dbN$, $P(n)_n$ is finite-dimensional over $\Bbbk$ and generates $P(n)_\bullet$ as graded $A_\bullet$-module (cf. \cite{MPQT}*{\S~2.2}).}

Koszul algebras have an exceptionally nice behavior in terms of cohomology: indeed, a quadratic algebra $A_\bullet$ is Koszul if, and only if, the Ext-groups $\mathrm{Ext}_{A_\bullet}^{i,j}(\Bbbk,\Bbbk)$ are trivial for $i\neq j$ (cf.~\cite{poliposi:book}*{Ch.~2, Def.~1}).
Moreover, Koszul algebras have the following fundamental property (cf.~\cite{poliposi:book}*{Ch.~2, \blue{Def.~1 and }Cor.~3.3}), which is essentially due to S. Priddy~\cite{priddy}*{\S 2}. (See also 
\cite{BGS}*{\S 2} for a very nice exposition of some classical results.)

\begin{prop}\label{prop:koszul}
A quadratic algebra $A_\bullet$ is Koszul if, and only if, the quadratic dual $A_\bullet^!$ is Koszul.
Moreover, if $A_\bullet$ is Koszul, then the isomorphism of vector spaces $\mathrm{Ext}_{A_\bullet}^{1,1}(\Bbbk,\Bbbk)\cong A_1^*$ yields an isomorphism of quadratic algebras
$$ \bigoplus_{i\geq0}\mathrm{Ext}_{A_\bullet}^{i,i}(\Bbbk,\Bbbk)\cong A_\bullet^!.$$
\end{prop}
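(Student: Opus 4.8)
The plan is to prove both assertions through the \emph{Koszul complex} of $A_\bullet=\bfQ_\bullet(V,\Omega)$, where $V=A_1$, relying throughout on the Ext-criterion quoted above, namely that $A_\bullet$ is Koszul if and only if $\mathrm{Ext}^{i,j}_{A_\bullet}(\Bbbk,\Bbbk)=0$ whenever $i\neq j$. The spaces interpolating between $A_\bullet$ and $A_\bullet^!$ are
\[
 C_n:=\bigcap_{r+s=n-2}V^{\otimes r}\otimes\Omega\otimes V^{\otimes s}\subseteq V^{\otimes n},\qquad C_0=\Bbbk,\quad C_1=V;
\]
a double-annihilator computation, resting on $(\Omega^\perp)^\perp=\Omega$, identifies each $C_n$ with the graded dual $(A^!_n)^*$ of the quadratic dual. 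I would then form the Koszul complex $\mathrm{K}_\bullet$ of free right $A_\bullet$-modules
\[
 \cdots\longrightarrow C_2\otimes A_\bullet\longrightarrow C_1\otimes A_\bullet\longrightarrow C_0\otimes A_\bullet\overset{\varepsilon}{\longrightarrow}\Bbbk\longrightarrow0,
\]
whose differential $C_n\otimes A_\bullet\to C_{n-1}\otimes A_\bullet$ is induced by the canonical inclusion $C_n\hookrightarrow C_{n-1}\otimes V$ followed by the multiplication $V\otimes A_\bullet\to A_\bullet$. Since $C_n\subseteq V^{\otimes(n-2)}\otimes\Omega$ and $\Omega$ maps to $0$ in $A_2$, one gets $d^2=0$, and as the generators $C_n\otimes 1$ land in $C_{n-1}\otimes A_{\geq1}$ the complex is minimal. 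Comparing $\mathrm{K}_\bullet$ with the minimal free resolution of $\Bbbk$ --- whose $i$-th term is generated in the internal degrees $j$ with $\mathrm{Ext}^{i,j}_{A_\bullet}(\Bbbk,\Bbbk)\neq0$ --- the quoted criterion gives: $A_\bullet$ is Koszul if and only if $\mathrm{K}_\bullet$ is exact in positive degrees, i.e.\ is a resolution of $\Bbbk$.

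I would establish the ``moreover'' using that, when $A_\bullet$ is Koszul, $\mathrm{K}_\bullet$ is a minimal free resolution of $\Bbbk$. Applying $\mathrm{Hom}_{A_\bullet}(-,\Bbbk)$ to the free modules $C_n\otimes A_\bullet$ returns $C_n^*$, and minimality (differentials valued in $A_{\geq1}$) annihilates all induced maps, whence $\mathrm{Ext}^{i,i}_{A_\bullet}(\Bbbk,\Bbbk)\cong C_i^*\cong A^!_i$ and $\mathrm{Ext}^{i,j}=0$ for $i\neq j$; in particular $\mathrm{Ext}^{1,1}\cong V^*=A^!_1$ and $\mathrm{Ext}^{2,2}\cong(V^*\otimes V^*)/\Omega^\perp=A^!_2$. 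It then remains to match products: under these identifications the Yoneda product on $\bigoplus_i\mathrm{Ext}^{i,i}_{A_\bullet}(\Bbbk,\Bbbk)$ corresponds to concatenation of classes inside $\bfT_\bullet(V^*)$, which is precisely the multiplication of $A_\bullet^!$. Minimality forces this diagonal algebra to be generated in bidegree $(1,1)$ by $V^*$, with degree-$2$ relations equal to the kernel $\Omega^\perp$ of $V^*\otimes V^*\twoheadrightarrow\mathrm{Ext}^{2,2}$; being quadratic, it is therefore $\bfQ_\bullet(V^*,\Omega^\perp)=A_\bullet^!$, which is the claimed isomorphism.

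For the equivalence ``$A_\bullet$ Koszul $\iff A_\bullet^!$ Koszul'' I would exploit the self-duality encoded in the spaces $C_n$. The subspaces $V^{\otimes r}\otimes\Omega\otimes V^{\otimes s}$ of $V^{\otimes n}$ generate, under sum and intersection, a lattice $L_n(A_\bullet)$, and Koszulity of $A_\bullet$ is equivalent --- by Backelin's reformulation of the Ext-criterion, cf.~\cite{poliposi:book}*{Ch.~2} and \cite{lodval:book}*{Ch.~4} --- to the distributivity of every $L_n(A_\bullet)$. The annihilator map $W\mapsto W^\perp$ is an anti-isomorphism from the subspace lattice of $V^{\otimes n}$ onto that of $(V^*)^{\otimes n}$, exchanging sums with intersections, and by the computation of the first paragraph it sends the generators $V^{\otimes r}\otimes\Omega\otimes V^{\otimes s}$ of $L_n(A_\bullet)$ exactly to the generators $(V^*)^{\otimes r}\otimes\Omega^\perp\otimes(V^*)^{\otimes s}$ of $L_n(A_\bullet^!)$. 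Thus $L_n(A_\bullet)$ and $L_n(A_\bullet^!)$ are anti-isomorphic; since distributivity of a lattice is self-dual, one is distributive iff the other is, so $A_\bullet$ is Koszul iff $A_\bullet^!$ is. (Concretely, graded $\Bbbk$-linear duality carries $\mathrm{K}_\bullet(A_\bullet)$ onto $\mathrm{K}_\bullet(A_\bullet^!)$ up to reindexing, via $C_n^*=A^!_n$ and Remark~\ref{rem:dual}, transferring acyclicity.)

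The main obstacle is precisely this last equivalence: the bijection of generating subspaces is immediate bookkeeping, but its usefulness depends on knowing that Koszulity is equivalent to a genuinely \emph{self-dual} condition --- distributivity of the lattices $L_n$, or acyclicity of a complex that linear duality interchanges with its $A_\bullet^!$-analogue. This is the content of Priddy's theorem; in the write-up I would either prove the distributive-lattice criterion together with its self-duality, or invoke \cite{poliposi:book}*{Ch.~2} and \cite{priddy}*{\S~2}, to which Proposition~\ref{prop:koszul} already defers.
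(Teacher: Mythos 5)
The paper gives no proof of Proposition~\ref{prop:koszul}: it is imported as a classical result, with a bare citation of Polishchuk--Positselski (Ch.~2), Priddy (\S~2) and Beilinson--Ginzburg--Soergel (\S~2), so there is no internal argument to compare against. Your sketch correctly reconstructs exactly that cited classical argument --- the Koszul complex built on $C_n\cong(A_n^!)^*$, its role as the minimal resolution in the Koszul case (giving the ``moreover'' identification of the diagonal Ext-algebra with $A_\bullet^!$), and Backelin's distributive-lattice reformulation, whose self-duality under the annihilator anti-isomorphism yields ``$A_\bullet$ Koszul $\Leftrightarrow A_\bullet^!$ Koszul'' --- while honestly deferring the two deep ingredients (Koszulity $\Leftrightarrow$ exactness of the Koszul complex, Koszulity $\Leftrightarrow$ distributivity) to the very sources the paper itself cites, which matches the paper's level of treatment.
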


\begin{exam}\label{ex:koszul}\rm
Let $V$ be a finite-dimensional vector space.
\begin{itemize}
 \item[(a)] The tensor algebra $\bfT_\bullet(V)$ and the algebra $\bfQ_\bullet(V,V^{\otimes2})$
are Koszul (cf.~\cite{MPQT}*{Ex.~2.9}).
\item[(b)] The symmetric algebra $\bfS_\bullet(V)$ and the exterior algebra $\bfLambda_\bullet(V)$ are Koszul (cf.~\cite{MPQT}*{Ex.~2.9}.
\end{itemize}
\end{exam}

\begin{exam}\label{ex:RAAG koszul}\rm
 Let $\Gamma=(\euV,\euE)$ be a finite simplicial graph.
 Then by the main result in \cite{froberg} the algebras $\bfA_\bullet(\Gamma)$ and $\bfLambda_\bullet(\Gamma)$ {of Example \ref{ex:RAAGs}} are Koszul (see also \cites{papa,thomas:graded}).
\end{exam}

\begin{exam}\label{ex:demushkin algebra}\rm
{ Put $\Bbbk=\F_p$.} 
For $d\geq1$ set $X=\{X_1,\ldots,X_d\}$ and $[X_i,X_j]=X_iX_j-X_jX_i$.
\begin{itemize}
 \item[(a)] If $d$ is even, let $f$ be the homogeneous polynomial of degree 2
 \[
  f=[X_1,X_2]+[X_3,X_4]+\ldots+[X_{d-1},X_d].
 \]
 \item[(b)] If $p=2$ and $d$ is even, let $f$ be the homogeneous polynomial of degree 2
 \[
  f=X_1^2+[X_1,X_2]+[X_3,X_4]+\ldots+[X_{d-1},X_d].
 \]
\item[(c)] If $p=2$ and $d$ is odd, let $f$ be the homogeneous polynomial of degree 2
 \[
  f=X_1^2+[X_2,X_3]+[X_4,X_5]+\ldots+[X_{d-1},X_d].
 \]
 \end{itemize} 
 Then the algebra $\FpX/(f)$ is Koszul.
These Koszul algebras are called {\sl Demushkin graded $\F_p$-algebras} (cf.~\cite{MPQT}*{\S~5.2} and \cite{cq:onerel}*{\S~4.1}).
\end{exam}

{
\begin{exam}\label{ex:demushkin algebra bis}\rm
Put $\Bbbk=\F_p$.
For $d\geq1$ set $X=\{X_1,\ldots,X_d\}$ and $[X_i,X_j]=X_iX_j-X_jX_i$. In each of the cases
\begin{itemize}
 \item[(a)] if $d$ is even and $f$ is the homogeneous polynomial of degree $2$
 \[
  f=[X_1,X_2]+[X_3,X_4]+\ldots+[X_{d-1},X_d]
 \]
 \item[(b)] if $p=2$, $d$ is even, and $f$ is the homogeneous polynomial of degree $2$
 \[
  f=X_1^2+[X_1,X_2]+[X_3,X_4]+\ldots+[X_{d-1},X_d]
 \]
\item[(c)] if $p=2$, $d$ is odd, and $f$ is the homogeneous polynomial of degree $2$
 \[
  f=X_1^2+[X_2,X_3]+[X_4,X_5]+\ldots+[X_{d-1},X_d]
 \]
 the {\sl one-relator algebra} $\FpX/(f)$ is Koszul.
 \end{itemize} 

These Koszul algebras are called {\sl Demushkin graded $\F_p$-algebras} (cf.~\cite{MPQT}*{\S~5.2} and \cite{cq:onerel}*{\S~4.1}).
\end{exam}
}

Moreover, Koszulity is preserved by taking direct sums and free products (cf.~\cite{poliposi:book}*{Ch.~3, Cor.~1.2}).

\begin{prop}\label{prop:koszul operations}
 If $A_\bullet$ and $B_\bullet$ are two Koszul algebras, then also the direct sum $A_\bullet\oplus B_\bullet$ and the free product $A_\bullet\sqcup B_\bullet$ are Koszul.
\end{prop}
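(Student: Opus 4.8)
The plan is to reduce the two assertions to a single one by exploiting the self-duality of Koszulity. By Proposition~\ref{prop:koszul}, $A_\bullet\oplus B_\bullet$ is Koszul if and only if its quadratic dual $(A_\bullet\oplus B_\bullet)^!$ is Koszul, and by the identities \eqref{eq:quaddual operations} one has $(A_\bullet\oplus B_\bullet)^!\cong A_\bullet^!\sqcup B_\bullet^!$. Since $A_\bullet^!$ and $B_\bullet^!$ are again Koszul by Proposition~\ref{prop:koszul}, it therefore suffices to prove that the free product of two Koszul algebras is Koszul; the statement about direct sums then follows at once by dualizing. So from now on I would focus on $C_\bullet:=A_\bullet\sqcup B_\bullet=\bfQ_\bullet(A_1\oplus B_1,\Omega_A\cup\Omega_B)$.

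The strategy for the free product is to manufacture a linear graded free resolution of the trivial module $\Bbbk$ over $C_\bullet$ out of the (linear, by Koszulity) minimal resolutions of $\Bbbk$ over $A_\bullet$ and over $B_\bullet$, and then to invoke the definition of Koszulity in \S~\ref{ssec:koszul} directly. The key structural input is that the free product behaves like a coproduct of augmented algebras: writing $\overline{A},\overline{B},\overline{C}$ for the respective augmentation ideals, the underlying graded space of $C_\bullet$ has a basis consisting of the alternating words in reduced homogeneous bases of $\overline{A}$ and $\overline{B}$. Two consequences of this description drive the argument. First, $C_\bullet$ is free as a right $A_\bullet$-module (on the words ending in a $\overline{B}$-letter, together with $1$), and likewise free as a right $B_\bullet$-module; in particular $C_\bullet$ is flat over each factor. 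Second, sorting nonempty words by their last letter yields a splitting of left $C_\bullet$-modules
\[
 \overline{C}\;\cong\;(C_\bullet\otimes_{A_\bullet}\overline{A})\;\oplus\;(C_\bullet\otimes_{B_\bullet}\overline{B}).
\]

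With these two facts in hand the homological bookkeeping is routine. Let $P^A_\bullet\to\Bbbk$ and $P^B_\bullet\to\Bbbk$ be the minimal graded free resolutions; by Koszulity each $P^A_n$ (resp.\ $P^B_n$) is generated in degree $n$, say $P^A_n\cong A_\bullet\otimes_\Bbbk M^A_n$ with $M^A_n$ concentrated in degree $n$. Deleting the augmentation gives free resolutions of $\overline{A}$ over $A_\bullet$ and of $\overline{B}$ over $B_\bullet$. Applying the exact functors $C_\bullet\otimes_{A_\bullet}-$ and $C_\bullet\otimes_{B_\bullet}-$ (exact by the flatness just noted) and using $C_\bullet\otimes_{A_\bullet}(A_\bullet\otimes_\Bbbk M^A_n)\cong C_\bullet\otimes_\Bbbk M^A_n$, one obtains free $C_\bullet$-resolutions of $C_\bullet\otimes_{A_\bullet}\overline{A}$ and of $C_\bullet\otimes_{B_\bullet}\overline{B}$. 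Taking their direct sum resolves $\overline{C}$ by the modules $C_\bullet\otimes_\Bbbk(M^A_n\oplus M^B_n)$, and splicing with $0\to\overline{C}\to C_\bullet\to\Bbbk\to0$ produces a free resolution of $\Bbbk$ over $C_\bullet$ whose $n$-th term is generated in degree $n$. This is exactly the Koszulity condition of \S~\ref{ssec:koszul}, so $C_\bullet$ is Koszul. The only genuinely non-formal step --- and the one I would expect to be the main obstacle --- is the pair of structural claims about the coproduct: the one-sided freeness of $C_\bullet$ over its factors and the left-module splitting of $\overline{C}$. Both follow from the normal-form/alternating-word description of the free product, but making that description precise (in particular, checking that no collapsing occurs when a word ending in a $\overline{B}$-letter is multiplied by an element of $\overline{A}$) is where the real content lies; everything downstream is flat base change and degree counting.
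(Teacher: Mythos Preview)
The paper does not supply its own proof of this proposition; it simply quotes \cite{poliposi:book}*{Ch.~3, Cor.~1.2}. Your argument is correct and is one of the standard routes to this fact. The duality reduction (via Proposition~\ref{prop:koszul} and \eqref{eq:quaddual operations}) to the free-product case is clean, and the construction of a linear resolution for $C_\bullet=A_\bullet\sqcup B_\bullet$ out of the two given Koszul resolutions is sound. The two structural inputs you isolate --- the one-sided freeness of $C_\bullet$ over each factor and the left $C_\bullet$-module splitting $\overline{C}\cong(C_\bullet\otimes_{A_\bullet}\overline{A})\oplus(C_\bullet\otimes_{B_\bullet}\overline{B})$ --- are precisely what the alternating-word normal form for the free product gives, and you are right that this is where the actual content sits. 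Once those are granted, flat base change and the splice with $0\to\overline{C}\to C_\bullet\to\Bbbk\to0$ produce a free resolution whose $n$-th term $C_\bullet\otimes_\Bbbk(M^A_n\oplus M^B_n)$ is generated in degree $n$, which is exactly the Koszulity criterion of \S~\ref{ssec:koszul}.
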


For a graded algebra $A_\bullet$, the {\sl Hilbert series} $h_{A_\bullet}(z)$ is the formal power series
\[
 h_{A_\bullet}(z)=\sum_{n\geq0}\dim(A_n)z^n.
\]
\blue{ There is a rich interplay between Hilbert series and Koszul property: the following two facts are of key importance for the main results of this paper} (cf.~\cite{poliposi:book}*{Ch.~2, \blue{Cor.~2.2, and Prop.~2.3}}).

\begin{prop}\label{prop:koszul A3}
 Let $A_\bullet=\blue{\bfQ_\bullet(V,\Omega)}$ be a quadratic algebra \blue{ with $d=\dim(V)$ generators and $r=\dim(\Omega)$ defining relations. 
\begin{itemize} 
\item[(i)]  If $h_{A_\bullet}(z)=(1-dz+rz^2)^{-1}$, then $A_\bullet$ is Koszul.
\item[(ii)] If $A_\bullet$ is Koszul, then one has an equality of formal power series
\[
h_{A_\bullet}(z)\cdot h_{A_\bullet^!}(-z)=1.
\]
\end{itemize}
}
\end{prop}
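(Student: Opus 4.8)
The plan is to derive both parts from the \emph{Koszul complex} $K_\bullet$ attached to $A_\bullet=\bfQ_\bullet(V,\Omega)$: the complex of free graded left $A_\bullet$-modules with $K_i=A_\bullet\otimes(A_i^!)^*$, whose $\dim(A_i^!)$ free generators are placed in internal degree $i$, augmented onto the trivial module $\Bbbk$. Since $K_i$ is free on generators concentrated in degree $i$, its Hilbert series is $\dim(A_i^!)\,z^i\,h_{A_\bullet}(z)$, and hence the alternating sum of the Hilbert series of the $K_i$ equals $h_{A_\bullet}(z)\cdot h_{A_\bullet^!}(-z)$. Everything then reduces to comparing this Euler characteristic with the homology of $K_\bullet$, recalling the standard fact that $A_\bullet$ is Koszul precisely when $K_\bullet$ is acyclic, i.e.\ a minimal linear free resolution of $\Bbbk$ in the sense of \S~\ref{ssec:koszul}.

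Part (ii) is then immediate: if $A_\bullet$ is Koszul, $K_\bullet$ resolves $\Bbbk$, so its Euler characteristic equals $h_\Bbbk(z)=1$, giving
\[
h_{A_\bullet}(z)\cdot\sum_{i\ge0}(-1)^i\dim(A_i^!)\,z^i=1,
\]
which is exactly the asserted identity $h_{A_\bullet}(z)\cdot h_{A_\bullet^!}(-z)=1$.

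For part (i) I would first record the Golod--Shafarevich lower bound valid for \emph{every} quadratic algebra with $d=\dim(V)$ generators and $r=\dim(\Omega)$ relations. Writing $(\Omega)$ for the relation ideal, the identity $(\Omega)_n=\Omega\otimes V^{\otimes(n-2)}+V\otimes(\Omega)_{n-1}$ on graded pieces yields right-exact sequences
\[
\Omega\otimes A_{n-2}\longrightarrow V\otimes A_{n-1}\longrightarrow A_n\longrightarrow 0,
\]
whence $\dim(A_n)\ge d\dim(A_{n-1})-r\dim(A_{n-2})$ for all $n$; equivalently $h_{A_\bullet}(z)\cdot(1-dz+rz^2)$ has non-negative coefficients, so that $h_{A_\bullet}(z)\succeq(1-dz+rz^2)^{-1}$ coefficientwise whenever the latter has non-negative coefficients. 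The hypothesis $h_{A_\bullet}(z)=(1-dz+rz^2)^{-1}$ forces this bound to be an \emph{equality} in every degree, which by the rank count above is equivalent to the injectivity of each leading map $\Omega\otimes A_{n-2}\to V\otimes A_{n-1}$.

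The final and hardest step is to upgrade this injectivity of the leading maps into acyclicity of the whole complex $K_\bullet$ --- i.e.\ into Koszulity. I would do this through the distributivity criterion of Polishchuk--Positselski: the equality of Hilbert series forces the lattice of subspaces of $V^{\otimes n}$ generated by the pieces $V^{\otimes a}\otimes\Omega\otimes V^{\otimes b}$ (with $a+2+b=n$) to be distributive in each degree, and distributivity implies the acyclicity of $K_\bullet$, hence Koszulity (\cite{poliposi:book}*{Ch.~2, Cor.~2.2}). This passage from a purely numerical equality to the exactness of a complex is the real content of the statement, and is the main obstacle: the Euler-characteristic identity of part (ii) is only \emph{necessary} for Koszulity in general, so one must genuinely exploit the extremal minimality encoded in $h_{A_\bullet}(z)=(1-dz+rz^2)^{-1}$ rather than the bare duality relation. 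Once acyclicity is in hand, Proposition~\ref{prop:koszul} retrieves $\bigoplus_i\mathrm{Ext}^{i,i}_{A_\bullet}(\Bbbk,\Bbbk)\cong A_\bullet^!$, refining the identity of part (ii).
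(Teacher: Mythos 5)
Your part (ii) is correct (the Euler--characteristic argument on the Koszul complex is the standard one), and note that the paper itself offers no proof of Proposition~\ref{prop:koszul A3} --- it is quoted from \cite{poliposi:book} --- so your attempt has to stand on its own. For part (i), everything up to and including the injectivity statement is sound: the right-exact sequences $\Omega\otimes A_{n-2}\to V\otimes A_{n-1}\to A_n\to 0$, the resulting bound $\dim(A_n)\geq d\dim(A_{n-1})-r\dim(A_{n-2})$, and the observation that the hypothesis $h_{A_\bullet}(z)=(1-dz+rz^2)^{-1}$ forces every map $g_n\colon\Omega\otimes A_{n-2}\to V\otimes A_{n-1}$ to be injective. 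The genuine gap is your final step: the claim that the numerical equality ``forces the lattice of subspaces $V^{\otimes a}\otimes\Omega\otimes V^{\otimes b}$ to be distributive in each degree'' is asserted, not proved, and it does not follow formally from anything preceding it. Distributivity of these lattices is Backelin's criterion and is \emph{equivalent} to Koszulity, so the assertion is exactly as strong as the statement being proved; moreover, the natural way to deduce distributivity from the Hilbert-series hypothesis is to establish Koszulity first, so as organized the argument is circular at its crucial point.

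The irony is that the gap is unnecessary, because your injectivity step already finishes the proof without any lattice theory and without the higher terms of the Koszul complex. Exactness of $0\to\Omega\otimes A_{n-2}\xrightarrow{\ g_n\ } V\otimes A_{n-1}\to A_n\to 0$ for every $n$ is precisely the degreewise exactness of the sequence of graded right $A_\bullet$-modules
\[
0\longrightarrow\Omega\otimes A_\bullet\longrightarrow V\otimes A_\bullet\longrightarrow A_\bullet\longrightarrow\Bbbk\longrightarrow0,
\]
in which $\Omega\otimes A_\bullet$ and $V\otimes A_\bullet$ are free with generating spaces concentrated in internal degrees $2$ and $1$ respectively. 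This is a linear free resolution of $\Bbbk$ in the sense of \S~\ref{ssec:koszul}, with $P(n)_\bullet=0$ for $n\geq3$, so $A_\bullet$ is Koszul directly from the definition. (A posteriori, Proposition~\ref{prop:koszul} then gives $A^!_i\cong\mathrm{Ext}^{i,i}_{A_\bullet}(\Bbbk,\Bbbk)=0$ for $i\geq3$, so the tail of the Koszul complex whose acyclicity you identified as ``the real content'' is in fact zero.) In short: delete the appeal to distributivity, and replace it with the remark that your exact sequences assemble into a length-two linear resolution; with that substitution the proof is complete and is, in essence, the argument behind the cited result of Polishchuk--Positselski.
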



\section{Pro-$p$ groups and graded algebras}\label{sec:gps}

Henceforth, we focus on finitely generated pro-$p$ groups.
Thus, every subgroup is tacitly assumed to be closed, and generators are to be considered in the topological sense.
In particular, for a pro-$p$ group $G$ and $n\geq1$, $G^n$ denotes the closed subgroup of $G$ generated by $n$th powers,
and, for $H_1,H_2$ two subgroups of $G$, $[H_1,H_2]$ denotes the closed subgroup of $G$ generated by the commutators
\[
 g_1^{-1}\cdot g_1^{g_2}=g_1^{-1}g_2^{-1}g_1g_2,\qquad g_1\in H_1,g_2\in H_2.
\]

\subsection{The Zassenhaus filtration}
\label{ssec:zass}

\begin{defin}\label{defin:zassenhaus}\rm
The {\sl $p$-Zassenhaus filtration} $(G_{(n)})_{n\geq1}$ of a pro-$p$ group $G$ is 
defined by
\begin{equation}\label{eq:zassenhaus defi}
\begin{split}
G_{(1)} &= G \\
 G_{(n)}&= G_{(\lceil n/p\rceil)}^p\cdot\prod_{h+k=n}[G_{(h)},G_{(k)}], \qquad n\geq2,
\end{split}
\end{equation}
where $\lceil n/p\rceil$ is the least integer $l$ such that $lp\geq n$.
\end{defin}

The $p$-Zassenhaus filtration of $G$ is the fastest descending series starting at $G$ satisfying
\begin{equation}\label{eq:prop zassenhaus}
[G_{(h)},G_{(k)}]\subseteq G_{(h+k)}\quad\text{and}\quad G_{(n)}^p\subseteq G_{(np)}
\end{equation}
for every $h,k,n\geq 1$.
Moreover, every quotient $G_{(n)}/G_{(n+1)}$ is a $p$-elementary abelian group, and thus a vector space over $\F_p$.
In particular, one has $G_{(2)}=G^p[G,G]$, and 
\[
 G_{(3)}=\begin{cases} G^p[G,[G,G]] & \text{if }p\neq2 \\ G^4[G,G]^2[G,[G,G]] & \text{if }p=2.\end{cases} 
\]
We set $\Lie(G)=\bigoplus_{n\geq1}G_{(n)}/G_{(n+1)}$.

Let $S$ be a free pro-$p$ group with basis $\calX=\{x_1,\ldots,x_d\}$.
Then one may define the $p$-Zassenhaus filtration of $S$ as follows.
Let $\F_p\dbl S\dbr$ denote the complete group algebra of $S$ --- namely, $\F_p\dbl S\dbr=\varprojlim_N\F_p[S/N]$,
where $N$ runs through the open normal subgroups of $S$.
The assignment $x\mapsto x-1$ induces an embedding of sets $S\hookrightarrow \F_p\dbl S\dbr$.

Given a set of indeterminates $X=\{X_1,\ldots,X_d\}$, let $\FppX$ denote the algebra of formal power series in $X$,
and let $I(X)$ be the augmentation ideal of $\FppX$, that is, the two-sided ideal $(X_1,\ldots,X_d)$.
Then there is an isomorphism of topological algebras 
\blue{\begin{equation}\label{eq:group algebra and power series iso}
\phi\colon \F_p\dbl S\dbr\to \FppX\qquad x_i\mapsto 1+X_i.
\end{equation}}
The composition of the embedding $S\hookrightarrow\F_p\dbl S\dbr$ with the isomorphism $\phi$ is the {\sl Magnus
embedding} $\psi\colon S\hookrightarrow\FppX$, and one has 
\[
 S_{(n)}=\{x\in S\mid \psi(x)\in I(X)^n\}=\{x\in S\mid \phi(x-1)\in I(X)^n\}.
\]
for $n\geq1$.

Moreover, one has a canonical isomorphism of graded algebras 
\[ \gr\FppX:=\bigoplus_{n\geq0}I(X)^n/I(X)^{n+1}\overset{\sim}{\longrightarrow} \FpX, \]
so that for a series $f\in\FppX$ such that $f\in I(X)^n\smallsetminus I(X)^{n+1}$, one may consider 
the class $f+I(X)^{n+1}$ as a homogeneous polynomial of $\FpX$ of degree $n$.
Likewise, one may consider the quotient $S_{(n)}/S_{(n+1)}$ as a subspace of the space of homogeneous polynomials
on $X$ of degree $n$
(cf., e.g., \cite{cq:onerel}*{\S~3.1}).

\begin{defin}\label{defin:initial form}\rm
For an element $x\in S_{(n)}\smallsetminus  S_{(n+1)}$, the class $\psi(x)+I(X)^{n+1}$, considered as a
homogeneous polynomial of $\FpX$ of degree $n$, is called the {\sl initial form} of $x$ in $\FpX$.
\end{defin}

In particular, the initial form of $x_i$ is $X_i$, and the initial form of the commutator $[x_i,x_j]=x_i^{-1}x_j^{-1}x_ix_j$
is the algebra commutator $[X_i,X_j]=X_iX_j-X_jX_i$, for every $i,j\in\{1,\ldots,d\}$.

\subsection{Restricted Lie algebras and graded group algebras}

A unitary associative $\F_p$-algebra $A$ may be turned into a Lie algebra by defining the Lie brackets
$[a,b]=ab-ba$ for all $a,b\in A$. 
The notion of restricted Lie algebra was introduced by N.~Jacobson (cf.~\cite{jac}*{\S~V.5}); we follow the definition given in {\cite{ddsms}*{\S~12.1}}.

\begin{defin}\label{defin:restricted}\rm
A Lie $\F_p$-algebra $\Lie$ is said to be a {\sl restricted Lie algebra} over $\F_p$ if there exists a unitary associative
$\F_p$-algebra $A$ and a monomorphism of Lie algebras $\vartheta\colon \Lie\to A$ (with $A$ considered as Lie algebra)
such that $\vartheta(v)^p\in\image(\vartheta)$ for every $v\in\Lie$. 
In particular, every unitary associative algebra can be considered as restricted Lie algebra.

The algebra $A$ together with the monomorphism $\vartheta$ is called a {\sl universal envelope} of $\Lie$ if it satisfies the 
following universal property: for every morphism of restricted Lie algebras $\phi\colon \Lie\to B$, where $B$ is a unitary
associative $\F_p$-algebra considered as a restricted Lie algebra, there exists a unique morphism of unitary associative $\F_p$-algebras
$\tilde\phi\colon A\to B$ such that $\phi=\tilde\phi\circ\vartheta$.
\end{defin}

Let $G$ be a finitely generated pro-$p$ group.
By \eqref{eq:prop zassenhaus}, the graded space $\Lie(G)$ endowed with the Lie brackets induced by commutators,
and $p$-power map induced by the $p$-power of elements, is a restricted Lie algebra.

Let $I_G$ denote the {\sl augmentation ideal} of the group $\F_p$-algebra $\F_p[G]$ associated to $G$.
Namely, $I_G$ is the two-sided ideal of $\F_p[G]$ generated by the elements $g-1$, $g\in G$.
The {\sl graded group $\F_p$-algebra} $\gr\F_p[G]$ associated to $G$ is the graded algebra
\[
 \gr\F_p[G]=\bigoplus_{n\geq0}I_G^n/I_G^{n+1},\qquad I_G^0=\F_p[G].
\]
One has the following result by A.~Jennings (see, e.g., \cite{ddsms}*{\S~12.2}).

\begin{prop}\label{prop:grFG}
 Let $G$ be a finitely generated pro-$p$ group.
Then for $g\in G_{(n)}\smallsetminus G_{(n+1)}$, the assignment 
\[
 g G_{(n+1)}\longmapsto (g-1)+I_G^{n+1}\in I_G^n/I_G^{(n+1)}
\]
induces a monomorphism of restricted Lie algebras $\Lie(G)\to\gr\F_p[G]$, so that $\gr\F_p[G]$ is the restricted envelope of $\Lie(G)$.
\end{prop}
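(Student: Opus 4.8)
\emph{Plan of proof.} The plan is to build the map one degree at a time, verify by direct computation in $\F_p[G]$ that it is a homomorphism of restricted Lie algebras, extend it to an algebra map out of the envelope via the universal property, observe that surjectivity is formal, and then isolate injectivity as the one serious point, which I would settle through the cocommutative Hopf structure on $\F_p[G]$ together with the free case already set up above. First I would check that $\lambda_n\colon G_{(n)}/G_{(n+1)}\to I_G^n/I_G^{n+1}$, $gG_{(n+1)}\mapsto(g-1)+I_G^{n+1}$, is a well-defined homomorphism of $\F_p$-vector spaces. Writing $D_n=\{g\in G\mid g-1\in I_G^n\}$ for the $n$-th dimension subgroup, well-definedness amounts to $G_{(n)}\subseteq D_n$: the family $(D_n)_{n\geq1}$ satisfies $D_1=G$, $[D_h,D_k]\subseteq D_{h+k}$ and $D_n^p\subseteq D_{np}$, so by the minimality of the $p$-Zassenhaus filtration among the series satisfying \eqref{eq:prop zassenhaus} one gets $G_{(n)}\subseteq D_n$. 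The inclusion $D_n^p\subseteq D_{np}$ rests on the characteristic-$p$ identity $g^p-1=(g-1)^p$, valid because the intermediate binomial coefficients $\binom{p}{k}$ ($1\leq k\leq p-1$) vanish in $\F_p$ and all powers of $g-1$ commute; additivity of $\lambda_n$ follows from $gh-1=(g-1)+(h-1)+(g-1)(h-1)$ with $(g-1)(h-1)\in I_G^{2n}\subseteq I_G^{n+1}$.

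The same identity $g^p-1=(g-1)^p$ shows $\lambda=\bigoplus_n\lambda_n$ respects the $p$-operation, and for the bracket, with $a=g-1\in I_G^h$ and $b=h'-1\in I_G^k$, the computation $[g,h']-1=g^{-1}h'^{-1}(gh'-h'g)$ together with $gh'-h'g=ab-ba$ and $g^{-1}h'^{-1}-1\in I_G$ gives $[g,h']-1\equiv ab-ba\pmod{I_G^{h+k+1}}$; hence $\lambda$ carries the group commutator to the associative commutator of $\gr\F_p[G]$. By the universal property of the restricted envelope (Definition~\ref{defin:restricted}), $\lambda$ extends uniquely to a homomorphism of graded $\F_p$-algebras $\tilde\lambda\colon\calU_{\Lie(G)}\to\gr\F_p[G]$, and $\tilde\lambda$ is surjective because $G$ is finitely generated, so $\gr\F_p[G]$ is generated in degree $1$ by $I_G/I_G^2=\image(\lambda_1)$. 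Thus the whole statement — both injectivity and the assertion that $\gr\F_p[G]$ \emph{is} the restricted envelope of $\Lie(G)$ — reduces to showing that $\tilde\lambda$ is injective, which by $G_{(n)}\subseteq D_n$ is equivalent to the reverse dimension-subgroup inclusion $D_n\subseteq G_{(n)}$.

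This last inclusion is the main obstacle. I would not try to control $\kernel\tilde\lambda$ through a presentation $G=S/R$, since for a general $G$ the initial forms of a set of defining relators need not generate the kernel of $\gr\F_p\dbl S\dbr\to\gr\F_p[G]$. Instead I would exploit the cocommutative Hopf algebra structure on $\F_p[G]$ given by $g\mapsto g\otimes g$: one has $\Delta(I_G^n)\subseteq\sum_{i+j=n}I_G^i\otimes I_G^j$, so $\gr\F_p[G]$ is a connected, graded, cocommutative Hopf $\F_p$-algebra whose degree-$1$ classes $(g-1)+I_G^2$ are primitive; being generated in degree $1$ it is primitively generated, and hence, by the restricted analogue of the Milnor--Moore theorem, it is canonically the restricted envelope of its restricted Lie algebra $P_\bullet$ of primitives. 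It then remains to show $\lambda(\Lie(G))=P_\bullet$, i.e. that every primitive is the class of some $g-1$ with $g\in G_{(n)}$. Here I would use the free case above as the model: for a free pro-$p$ group $S$ on $\calX$ the Magnus embedding identifies $\gr\F_p\dbl S\dbr$ with $\FpX\isom\bfT_\bullet(V)$, and the equality $S_{(n)}=\{x\in S\mid\psi(x)\in I(X)^n\}$ makes $\lambda_S$ injective with image the free restricted Lie algebra on $\calX$, whose restricted envelope is exactly $\bfT_\bullet(V)$; so $\tilde\lambda_S$ is an isomorphism. The agreement of primitives with $\lambda(\Lie(G))$ in general — equivalently $D_n=G_{(n)}$, Quillen's and Jennings' identification of the dimension subgroups with the $p$-Zassenhaus filtration — is the genuine heart of the argument, with everything preceding it being a direct computation inside $\F_p[G]$.
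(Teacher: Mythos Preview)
The paper does not give a proof of this proposition: it is quoted as a classical result of Jennings and referred to \cite{ddsms}*{\S~12.2}. There is therefore no argument in the paper to compare your proposal against; you are supplying what the authors deliberately outsourced.

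Your plan is the standard route and the computations you sketch (well-definedness via $G_{(n)}\subseteq D_n$, additivity, the bracket and $p$-power identities, surjectivity of $\tilde\lambda$) are all correct. You also correctly isolate the substantive content as the dimension-subgroup theorem $D_n=G_{(n)}$. One small logical wrinkle: you assert that injectivity of $\tilde\lambda$ is ``equivalent'' to $D_n\subseteq G_{(n)}$, and later that $\lambda(\Lie(G))=P_\bullet$ is ``equivalently'' $D_n=G_{(n)}$. The forward implications hold, but passing from injectivity of $\lambda$ (which is what $D_n=G_{(n)}$ gives) to injectivity of $\tilde\lambda$ is not formal---it needs exactly the Milnor--Moore step you describe. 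In fact, once Milnor--Moore identifies $\gr\F_p[G]$ with $\calU_{P_\bullet}$, the equality $\lambda(\Lie(G))=P_\bullet$ already follows because $\lambda(\Lie(G))$ is the restricted Lie subalgebra generated by degree~$1$; the genuine remaining content is the injectivity of $\lambda$ itself, i.e.\ Jennings--Lazard--Quillen. This is only a reshuffling of the same ingredients; your plan has all of them.
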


Given $X=\{X_1,\ldots,X_d\}$, consider $\FpX$ as a restricted Lie algebra.
The Lie subalgebra $\Lie(X)$ of $\FpX$ generated by $X$ is a free restricted Lie algebra,
and $\FpX$ is its universal envelope.
Moreover, for $S$ a free pro-$p$ group with basis $\calX=\{x_1,\ldots,x_d\}$, we may identify $\Lie(S)$ with $\Lie(X)$ \blue{ via the isomorphism that carries the initial form of each $x_i$ to $X_i$},
and $\gr\F_p[S]$ with $\FpX$ \blue{ via the graded version of the isomorphism \eqref{eq:group algebra and power series iso}} (cf.~\cite{jochen:mild}*{Rem.~2.3}).

\begin{rem}\label{rem:LmodR}\rm
 Let $\rlie$ be a restricted ideal of the free restricted Lie algebra $\Lie(X)$ --- i.e., $\rlie$ is an
ideal of the Lie algebra $\Lie(X)$, considered as subalgebra of $\FpX$, with the further condition that $f^p\in\rlie$
for every $f\in\rlie$ ---, and set $\Lie=\Lie(X)/\rlie$.
Then the universal envelope $\calU_{\Lie}$ of $\Lie$ is isomorphic to the quotient $$\FpX/\mathcal{R}=\calU_{\Lie(X)}/\mathcal{R},$$
where $\mathcal{R}$ is the two-sided ideal of $\calU_{\Lie(X)}$ (identified with $\FpX$) generated by $\rlie$ 
(cf.~\cite{jochen:mild}*{Prop.~2.1}).
\end{rem}


\section{Mild pro-$p$ groups}
\label{sec:results}

A short exact sequence of pro-$p$ groups 
\begin{equation}\label{eq:presentation}
 \xymatrix{ \{1\}\ar[r] & R\ar[r] &S\ar[r] &G\ar[r] &\{1\}},
\end{equation}
with $S$ a free pro-$p$ group,
is called a {\sl presentation} of the pro-$p$ group $G$.
If $R\subseteq S_{(2)}$, then the presentation \eqref{eq:presentation} is {\sl minimal} --- roughly speaking,
$S$ and $G$ have the ``same'' minimal generating system.
In this case, a minimal set of elements of $S$ which generates $R$ as normal subgroup is called a set of {\sl defining
relations} of $G$.
A pro-$p$ group is {\sl finitely presented} if has a minimal presentation \eqref{eq:presentation} where $S$ is finitely generated
and the number of defining relations is finite.
 

\subsection{Cohomology of pro-$p$ groups}

For a pro-$p$ group $G$ we shall denote the {Galois} $\F_p$-cohomology groups simply by $H^n(G)$ for every $n\geq0$.
Thus $H^0(G)=\F_p$ and, given a minimal presentation \eqref{eq:presentation}, one has the following isomorphisms of vector spaces:
\begin{equation}\label{eq:H1H2}
 \begin{split}
  H^1(G) &\cong H^1(S)\cong(G/G_{(2)})^*, \\ H^2(G) &\cong H^1(R)^G\cong(R/R^p[R,S])^*    \end{split}  \end{equation}
(cf.~\cite{nsw:cohn}*{Prop.~3.9.1 and Prop.~3.5.9}).
In particular, $\dim(H^1(G))$ is the minimal number of generators of $G$, and $\dim(H^2(G))$ is the number of 
defining relations of $G$\blue{, which are therefore invariants of $G$}.
For a pro-$p$ group $G$, the cohomological dimension of $G$ is the number
\[
 \cd(G)=\max\{n\geq1\mid H^n(G)\neq0\}
\]
(cf.~\cite{nsw:cohn}*{Def.~3.3.1}).

The $\F_p$-cohomology of a pro-$p$ group comes endowed with the {\sl cup product} 
\[
 \xymatrix{ H^i(G)\times H^j(G)\ar[r]^-{\cup} & H^{i+j}(G) },
\]
which is graded-commutative, i.e.,
$$\alpha\cup\beta=(-1)^{ij}\beta\cup\alpha$$ for $\alpha\in H^i(G)$ and $\beta\in H^j(G)$.
For further facts on cohomology of pro-$p$ groups we refer to \cite{nsw:cohn}*{\S~III.9}.

\blue{T}he cup product from degree $1$ to degree $2$ in the $\F_p$-cohomology of a pro-$p$ group \blue{ is closely related to} the initial forms of the defining relations of the group itself (cf.~\cite{MPQT}*{Thm.~7.3 \blue{ and Cor.~7.4}}).

\begin{prop}\label{prop:H2}
Let $G$ be a finitely generated pro-$p$ group with minimal presentation \eqref{eq:presentation}.
The following are equivalent:
\begin{itemize}
 \item[(i)] the \blue{(bilinear extension of the) }cup product $H^1(G)^{\otimes2}\to H^2(G)$ \blue{ is surjective};
 \item[(ii)] one has the equality $R^p[R,S]=R\cap S_{(3)}$.
\end{itemize}
\blue{If (i) and (ii) hold}, given a\blue{ny} set of defining relations $\{r_1,\ldots,r_m\}\subseteq R$, for every $i=1,\ldots,m$ the initial form of $r_i$ is a homogeneous polynomial of degree $2$ (here we identify $\Lie(S)$ with $\Lie(X)$).
\end{prop}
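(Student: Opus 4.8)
The plan is to work entirely through the Magnus embedding and the dictionary between the cup product and the cohomological description of $H^2(G)$ recorded in \eqref{eq:H1H2}. The key identifications are that $H^1(G)\cong(G/G_{(2)})^*$ is dual to $S_{(1)}/S_{(2)}$, and that $H^2(G)\cong(R/R^p[R,S])^*$. I would first unravel the cup product $H^1(G)^{\otimes 2}\to H^2(G)$ in terms of the $5$-term exact sequence (or the Lyndon--Hochschild--Serre spectral sequence) associated to the presentation \eqref{eq:presentation}: the transgression $\mathrm{trg}\colon H^1(R)^G\to H^2(G)$ realizes the isomorphism $H^2(G)\cong(R/R^p[R,S])^*$, and the cup product of one-dimensional classes is dual to the quadratic part of the initial forms of the defining relations. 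Concretely, the obstruction to surjectivity of the cup product lives precisely in the subquotient measuring how far the relations reach past degree $3$ in the Zassenhaus filtration.

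The central step is the identification $R^p[R,S]=R\cap S_{(3)}$ under hypothesis (i). Here I would argue as follows. Always $R\subseteq S_{(2)}$ by minimality, and one checks the inclusion $R^p[R,S]\subseteq R\cap S_{(3)}$ directly from the defining properties \eqref{eq:prop zassenhaus} of the Zassenhaus filtration: since $R\subseteq S_{(2)}$ we get $R^p\subseteq S_{(2p)}\subseteq S_{(3)}$ (using $p\ge 2$ so that $2p\ge 3$, with the boundary case $p=2$ giving $2p=4$, still inside $S_{(3)}$), and $[R,S]\subseteq[S_{(2)},S_{(1)}]\subseteq S_{(3)}$. Hence $R^p[R,S]\subseteq R\cap S_{(3)}$ unconditionally. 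The reverse inclusion $R\cap S_{(3)}\subseteq R^p[R,S]$ is the substantive content and is exactly equivalent to surjectivity of the cup product: the quotient $(R\cap S_{(3)})/R^p[R,S]$ sits inside $R/R^p[R,S]\cong H^2(G)^*$, and I would show that its annihilator is precisely the image of the cup product $H^1(G)^{\otimes 2}\to H^2(G)$. The mechanism is that a class in $H^2(G)$ pairs to zero against every cup product if and only if the corresponding relation has initial form of degree $\ge 3$, i.e.\ lies in $S_{(3)}$ modulo $R^p[R,S]$. This dualizes the equivalence of (i) and (ii).

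For the final assertion, once (i) and (ii) hold, I would choose any set of defining relations $\{r_1,\dots,r_m\}\subseteq R\subseteq S_{(2)}$. Each $r_i$ has initial form of degree at least $2$ since $r_i\in S_{(2)}$. If some $r_i$ had initial form of degree $\ge 3$, then $r_i\in S_{(3)}$, so $r_i\in R\cap S_{(3)}=R^p[R,S]$ by (ii); but this would mean $r_i$ is redundant modulo $R^p[R,S]$, contradicting the minimality of the defining set, since the images of $r_1,\dots,r_m$ form a basis of $R/R^p[R,S]\cong H^2(G)^*$. Therefore every $r_i$ lies in $S_{(2)}\smallsetminus S_{(3)}$ and its initial form is homogeneous of degree exactly $2$, using the identification of $\mathfrak{L}(S)$ with $\mathfrak{L}(X)$ from \S\ref{ssec:zass}.

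I expect the main obstacle to be making the duality between the cup product and the filtration quotient $(R\cap S_{(3)})/R^p[R,S]$ fully precise. The clean inclusion $R^p[R,S]\subseteq R\cap S_{(3)}$ is routine filtration bookkeeping; the genuine work is pinning down, via the transgression and the explicit description of $H^2$ in \eqref{eq:H1H2}, that the cokernel of the cup product is canonically dual to $(R\cap S_{(3)})/R^p[R,S]$. This is where the comparison with the Magnus embedding and the initial forms must be carried out carefully, and where the graded-commutativity of the cup product enters to match the antisymmetry of the Lie-algebra commutators $[X_i,X_j]$.
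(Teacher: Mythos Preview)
The paper does not supply its own proof of this proposition: it is quoted with the citation \cite{MPQT}*{Thm.~7.3 and Cor.~7.4} and then used as a black box. So there is no in-paper argument to compare against.

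That said, your outline is correct and is essentially the standard route taken in \cite{MPQT} (and, underneath, in \cite{nsw:cohn}*{Prop.~3.9.13}). The unconditional inclusion $R^p[R,S]\subseteq R\cap S_{(3)}$ is exactly the filtration bookkeeping you describe. The substantive equivalence is precisely the duality you isolate: under the identifications \eqref{eq:H1H2}, the pairing
\[
H^1(G)\times H^1(G)\times \bigl(R/R^p[R,S]\bigr)\longrightarrow \F_p
\]
sending $(\chi,\psi,\bar r)$ to $\mathrm{trg}^{-1}(\chi\cup\psi)(\bar r)$ reads off the degree-$2$ coefficients of $r$ modulo $S_{(3)}$, so a class $\bar r$ annihilates every cup product if and only if $r\in S_{(3)}$. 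This gives exactly the canonical duality between the cokernel of $H^1(G)^{\otimes2}\to H^2(G)$ and $(R\cap S_{(3)})/R^p[R,S]$ that you flag as the main obstacle; once stated this way it is a direct consequence of the explicit formula in \cite{nsw:cohn}*{Prop.~3.9.13}. Your deduction of the final assertion from (ii) via minimality of the defining set is also the intended argument.
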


\blue{In particular}, if a finitely generated pro-$p$ group has quadratic $\F_p$-cohomology,
then the two conditions in Proposition~\ref{prop:H2} hold.

\begin{rem}\label{rem:number relations}\rm
Let $G$ be a finitely generated pro-$p$ group with minimal presentation \eqref{eq:presentation}, and suppose that $G$ satisfies the two equivalent conditions in Proposition~\ref{prop:H2}.
Then 
\[
   \dim(R/R^p[R,S]) =\dim(RS_{(3)}/S_{(3)}) \leq\dim(S_{(2)}/S_{(3)})<\infty\]
namely, $G$ has a finite number of defining relations.
\end{rem}

Fix a basis $\calX=\{x_1,\ldots,x_d\}$ of $S$ and a set of defining relations $\{r_1,\ldots,r_m\}$ of $G$, and
set $X=\{X_1,\ldots,X_d\}$.
After identifying $\Lie(S)$ with $\Lie(X)$, we may identify the quotient $S/S_{(2)}$ with the space of homogeneous polynomial of
degree 1 in $\FpX$, and we may consider the quotient $RS_{(3)}/S_{(3)}$ as the subspace
of the space of homogeneous polynomial of $\FpX$ of degree 2 generated by the initial forms of the defining relations.
Then one has the following (cf.~\cite{MPQT}*{Thm.~F}).

\begin{prop}\label{prop:HG}
 Let $G$ be a finitely presented pro-$p$ group with minimal presentation \eqref{eq:presentation}.
If $H^\bullet(G)$ is quadratic, then it is isomorphic to the quadratic dual of the quadratic algebra
\[
 \bfQ_\bullet(S/S_{(2)},RS_{(3)}/S_{(3)})=\frac{\bfT_\bullet(S/S_{(2)})}{(RS_{(3)}/S_{(3)})}\cong\frac{\FpX}{\mathcal{R}},
\]
where $\mathcal{R}$ is the two-sided ideal generated by the initial forms of the defining relations of $G$.
\end{prop}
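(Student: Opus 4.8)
The plan is to exhibit $H^\bullet(G)$ directly as the quadratic algebra $\bfQ_\bullet(V^*,\Omega^\perp)$, where $V=S/S_{(2)}$ and $\Omega=RS_{(3)}/S_{(3)}$, and then to read off from Definition~\ref{defin:quad} that this is precisely the quadratic dual of $\bfQ_\bullet(V,\Omega)$. First I would record what quadraticity buys us. Since $H^\bullet(G)$ is generated in degree $1$ with relations concentrated in degree $2$, the canonical surjection $\bfT_\bullet(H^1(G))\twoheadrightarrow H^\bullet(G)$ presents it as $\bfQ_\bullet(H^1(G),K)$, where $K=\kernel\!\left(H^1(G)\otimes H^1(G)\xrightarrow{\ \cup\ }H^2(G)\right)$ is the degree-$2$ part of the cup product. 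In particular the cup product from degree $1$ to degree $2$ is surjective, so condition (i) of Proposition~\ref{prop:H2} holds, and hence so does (ii): $R^p[R,S]=R\cap S_{(3)}$, and the initial forms of the defining relations are genuinely homogeneous of degree $2$, so that $\Omega$ really is a subspace of $V\otimes V$.

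Next I would pin down the two low-degree cohomology groups via \eqref{eq:H1H2}. On the one hand $H^1(G)\cong(S/S_{(2)})^*=V^*$, which matches the degree-$1$ component of $\bfQ_\bullet(V,\Omega)^!$. On the other hand, using $R^p[R,S]=R\cap S_{(3)}$ from the first step and the second isomorphism theorem,
\[
H^2(G)\cong\left(R/R^p[R,S]\right)^*=\left(R/(R\cap S_{(3)})\right)^*\cong\left(RS_{(3)}/S_{(3)}\right)^*=\Omega^*.
\]
A dimension count (cf.~Remark~\ref{rem:number relations}) shows that $\dim\Omega$ equals the number $m$ of defining relations, so the $m$ initial forms, which span $\Omega$, are in fact a basis; this makes the dual basis of $\Omega^*\cong H^2(G)$ available for the computation below.

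With $H^1(G)\cong V^*$ and $H^2(G)\cong\Omega^*$ in hand, the whole proposition reduces to a single compatibility statement: under these identifications the cup product
\[
H^1(G)\otimes H^1(G)\cong(V\otimes V)^*\longrightarrow\Omega^*\cong H^2(G)
\]
is the transpose of the inclusion $\iota\colon\Omega\hookrightarrow V\otimes V$. Granting this, $K=\kernel(\iota^*)=(\image\iota)^\perp=\Omega^\perp$, whence $H^\bullet(G)\cong\bfQ_\bullet(V^*,\Omega^\perp)=\bfQ_\bullet(V,\Omega)^!$, which is exactly the assertion.

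I expect this last compatibility to be the main obstacle, as it is the only point where the genuine multiplicative structure of Galois cohomology enters, rather than mere dimension bookkeeping. I would establish it concretely: fix the basis $\chi_1,\dots,\chi_d$ of $H^1(G)$ dual to $X_1,\dots,X_d$, write the initial form of each defining relation $r_k$ as $\sum_{i,j}c^{(k)}_{ij}X_iX_j\in\Omega$, and verify the Fox-derivative/Magnus identity $\langle\rho_k,\chi_i\cup\chi_j\rangle=c^{(k)}_{ij}$, where $\{\rho_k\}$ is the basis of $\Omega^*\cong H^2(G)$ dual to the initial forms. This pairing says precisely that the matrix of $\cup$ is the transpose of the matrix of $\iota$. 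The computation is the content of the cup-product description recalled in Proposition~\ref{prop:H2} (cf.~\cite{MPQT}*{Thm.~7.3}): one evaluates cup products through the isomorphism $H^2(G)\cong H^1(R)^G$ and the transgression in the Lyndon--Hochschild--Serre spectral sequence of the presentation \eqref{eq:presentation}, reading off the coefficients of the quadratic initial forms. Once this formula is in place, the proof closes immediately.
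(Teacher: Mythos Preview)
The paper does not actually prove Proposition~\ref{prop:HG}: it is quoted verbatim from \cite{MPQT}*{Thm.~F} with no argument given here. So there is no ``paper's own proof'' to compare against; what you have written is a self-contained sketch of the result that the paper imports.

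Your argument is sound and is essentially the standard route to this statement. The reduction to showing that, under the identifications $H^1(G)\cong V^*$ and $H^2(G)\cong\Omega^*$, the cup product is the transpose of the inclusion $\Omega\hookrightarrow V\otimes V$ is exactly right, and you correctly flag this as the one genuinely cohomological step. Two small remarks. First, the pairing formula you quote, $\langle\rho_k,\chi_i\cup\chi_j\rangle=c^{(k)}_{ij}$, is the content of \cite{nsw:cohn}*{Prop.~3.9.13} (used elsewhere in the paper) rather than of Proposition~\ref{prop:H2} itself, which only records the surjectivity equivalence; you should cite the former directly. Second, be aware that the image of $\Omega$ in $V\otimes V$ lies inside the degree-$2$ part of the restricted Lie subalgebra $\Lie(X)\subset\FpX$ (commutators, plus squares when $p=2$), so when you write the initial form as $\sum_{i,j}c^{(k)}_{ij}X_iX_j$ the coefficients are not free but satisfy the corresponding (skew-)symmetry; this is harmless for your transpose argument but worth noting so that the formula matches the one in \cite{nsw:cohn}.
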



\subsection{Strongly free presentations}

The concept of mild group has been introduced by J.~Labute in \cite{labute:lie} and D.~Anick in \cite{anick} as a group
whose relators are as independent as possible with respect to the lower central series.
Mild pro-$p$ groups were subsequently defined for several filtrations
in \cite{labute:mild}, for $p$ odd, and in \cite{LM:mild2} and \cite{forre}, for $p=2$.
It came as a surprise when J.~Labute discovered that these groups have cohomological dimension $2$ and some occur naturally
as Galois groups of maximal $p$-extensions with restricted ramification.
Here we present the definition with respect to the $p$-Zassenhaus filtration.

Recall that one may consider the initial form of an element of a finitely generated free pro-$p$ group $S$ as a homogeneous
polynomial in $\Lie(X)\subseteq\FpX$.

\begin{defin}\label{defin:mild}\rm
\begin{itemize}
\item[(a)] Let $\rho_1,\dots,\rho_m$ be homogeneous elements of \blue{$\FpX$} of degrees $s_i=\deg \rho_i \blue{\ >1}$ and let $\mathcal{R}$ be the two-sided ideal of $\FpX$ generated by $\{\rho_1,\dots,\rho_m\}$.
The sequence $(\rho_1,\dots,\rho_m)$ is \emph{strongly free} if the Hilbert series of the quotient algebra
$A_\bullet=\FpX/\mathcal{R}$ is
\begin{equation}\label{eq:hilbert}
 h_{A_\bullet}(z)=\frac{1}{1-dz+(t^{s_1}+\dots+t^{s_m})}.
\end{equation}
The empty sequence is considered to be strongly free.
 \item[(b)]  A minimal finite presentation of a finitely generated pro-$p$ group \eqref{eq:presentation}, 
with defining relations $\{r_1,\ldots,r_m\}$,  
is called a \emph{strongly free presentation} (with respect to the $p$-Zassenhaus filtration)
if the sequence $(\bar r_1,\dots,\bar r_m)$ of the initial forms of the relations is strongly free.
A finitely presented pro-$p$ group that has a strongly free presentation is called a \emph{mild} group
(with respect to the $p$-Zassenhaus filtration).
\end{itemize}
\end{defin}

Mild pro-$p$ groups have exceptionally nice behavior both in terms of $\F_p$-cohomology and in terms of the graded group algebra,
as shown by the following (cf.~\cite{jochen:mild}*{Thm~2.12}). 

\begin{prop}\label{prop:mild}
 Let $G$ be a mild pro-$p$ group, with a strongly free presentation \eqref{eq:presentation} 
and defining relations $\{r_1,\ldots,r_m\}$.
Then
\begin{itemize}
 \item[(i)] $\cd(G)=2$;
 \item[(ii)] one has an isomorphism of graded $\F_p$-algebras $\gr\FppG\cong \FpX/\mathcal{R}$,
where $\mathcal{R}$ is the two-sided ideal generated by the initial forms of the defining relations
$\{r_1,\ldots,r_m\}$.
\end{itemize}
\end{prop}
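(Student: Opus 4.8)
The plan is to establish (ii) first and then deduce (i). Write $\Lambda:=\FppG$ and $A_\bullet=\FpX/\mathcal{R}$. The quotient map $S\twoheadrightarrow G$ induces a surjection of complete group algebras $\F_p\dbl S\dbr\twoheadrightarrow\Lambda$ whose kernel is the closed two-sided ideal $J$ generated by the elements $r_i-1$. Filtering both algebras by powers of the augmentation ideal and passing to associated graded algebras — using the identification $\gr\F_p\dbl S\dbr\cong\FpX$ and Proposition~\ref{prop:grFG} — yields a surjection of graded algebras $\FpX\twoheadrightarrow\gr\FppG$ with kernel the initial ideal $\mathrm{in}(J)$. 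Since the initial form of $r_i$ is exactly $\rho_i=\bar r_i$ (Definition~\ref{defin:initial form}), we have $\mathcal{R}\subseteq\mathrm{in}(J)$, so the surjection factors through a graded surjection $\bar\pi\colon A_\bullet\twoheadrightarrow\gr\FppG$. Everything reduces to showing that $\bar\pi$ is an isomorphism.

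For (ii) I would run a Hilbert-series squeeze. On one hand $\bar\pi$ gives $h_{\gr\FppG}(z)\preceq h_{A_\bullet}(z)$ coefficientwise, and by the very definition of strong freeness (Definition~\ref{defin:mild}) the right-hand side equals $(1-dz+\sum_i z^{s_i})^{-1}$. On the other hand, the Golod--Shafarevich-type lower bound for the finitely presented pro-$p$ group $G$, whose minimal graded relations sit in degrees $s_i$, gives the matching inequality $h_{\gr\FppG}(z)\succeq(1-dz+\sum_i z^{s_i})^{-1}$. Combining the two bounds forces $h_{\gr\FppG}=h_{A_\bullet}$, so $\bar\pi$ is a degreewise surjection between finite-dimensional spaces of equal dimension, hence an isomorphism. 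This proves (ii).

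For (i) I would combine (ii) with the homological content of strong freeness. By Anick's theory a strongly free sequence is homologically regular, so $A_\bullet$ admits a length-two minimal graded free resolution of $\F_p$,
\[
0\longrightarrow\bigoplus_{i=1}^{m}A_\bullet(-s_i)\xrightarrow{\ \bar\partial_2\ }A_\bullet(-1)^{\oplus d}\xrightarrow{\ \bar\partial_1\ }A_\bullet\longrightarrow\F_p\longrightarrow 0,
\]
and in particular $A_\bullet$ has global dimension $2$. The minimal presentation of $G$ furnishes the beginning of a free resolution over $\Lambda$, namely $\Lambda^{m}\xrightarrow{\partial_2}\Lambda^{d}\xrightarrow{\partial_1}\Lambda\to\F_p\to 0$, where $\partial_1,\partial_2$ are given by the classes of $x_j-1$ and by the Fox derivatives of the $r_i$; this complex is exact except possibly at $\Lambda^m$, since the $r_i$ generate $R$ as a closed normal subgroup. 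Filtering each term (shifting the $i$-th summand of $\Lambda^m$ by $s_i$) makes $\partial_1,\partial_2$ filtered of degree zero, and the fundamental formula of Fox calculus $r_i-1=\sum_j(\partial r_i/\partial x_j)(x_j-1)$ shows, on passing to associated graded and using (ii) to identify $\gr\Lambda=A_\bullet$, that $\gr\partial_1=\bar\partial_1$ and $\gr\partial_2=\bar\partial_2$. Thus the associated graded complex is the exact resolution displayed above; by the standard principle that graded exactness lifts to exactness over a complete filtered ring, $\partial_2$ is injective. Hence $\F_p$ has a free $\Lambda$-resolution of length two, giving $\cd(G)\leq 2$, while $\cd(G)\geq 2$ follows from $m\geq 1$ and \eqref{eq:H1H2}, so that $H^2(G)\neq 0$ (if $m=0$ then $G$ is free and $\cd(G)=1$).

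The main obstacle is the purely algebraic heart supplied by Anick's theory of strongly free (inert) sequences: the equivalence between the Hilbert-series condition defining strong freeness and the homological regularity of $A_\bullet$ — global dimension two with the length-two resolution above — together with the Golod--Shafarevich lower bound that makes the squeeze in (ii) work. Granting these, the remaining steps — identifying $\gr$ of the Fox-derivative complex with the Anick resolution via initial forms, and lifting graded exactness to the complete algebra $\Lambda$ — are essentially bookkeeping, although the Fox-calculus identification of $\gr\partial_2$ with $\bar\partial_2$ and the separatedness and completeness of the filtrations needed for the lifting step both require care.
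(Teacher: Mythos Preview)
The paper does not give its own proof of this proposition; it is quoted as a known result, with the reference \cite{jochen:mild}*{Thm~2.12}. Your sketch is precisely the standard route taken in the literature (Anick, Labute, Forr\'e, G\"artner): obtain the surjection $A_\bullet\twoheadrightarrow\gr\FppG$ from initial forms, squeeze Hilbert series using the group-theoretic Fox--Golod inequality, and then lift the length-two Anick resolution of $\F_p$ over $A_\bullet$ through the filtered Fox-derivative complex over $\Lambda$ to get $\cd(G)\le 2$.

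One point deserves care. The Golod--Shafarevich input you invoke is not the coefficientwise inequality $h_{\gr\FppG}(z)\succeq P(z)^{-1}$ with $P(z)=1-dz+\sum_i z^{s_i}$, but rather
\[
h_{\gr\FppG}(z)\cdot P(z)\;\succeq\;1,
\]
which is what the Euler-characteristic/spectral-sequence argument on the filtered exact complex $0\to N\to\Lambda^m\to\Lambda^d\to\Lambda\to\F_p\to0$ actually yields (it expresses $h_{\gr N}\succeq 0$). Since $P(z)$ has a negative coefficient, this does \emph{not} formally imply $h_{\gr\FppG}\succeq P^{-1}$, so the ``squeeze'' is not a direct coefficient comparison. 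Instead, writing $h_{\gr\FppG}=P^{-1}-\epsilon$ with $\epsilon\succeq0$ (from the surjection $A_\bullet\twoheadrightarrow\gr\FppG$ and $h_{A_\bullet}=P^{-1}$), the inequality above becomes $\epsilon\cdot P\preceq 0$; a one-line induction on the degree (the constant term of $P$ is $1$) then forces $\epsilon=0$. With this adjustment your argument for (ii) closes, and your treatment of (i) via graded-to-complete lifting is the standard one.
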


\blue{As a consequence, the graded components of the $\F_p$-cohomology algebra $H^\bullet(G)$ of a mild pro-$p$ group $G$ are completely characterized by \eqref{eq:H1H2}.}


\subsection{Mild pro-$p$ groups and Koszul algebras}\label{ssec:mild koszul}

Now we can prove our main results.

\begin{thm}[\blue{Theorem~\ref{thm:main intro}}]\label{thm:mildkoszul}
Let $G$ be a mild pro-$p$ group, with strongly free presentation \eqref{eq:presentation}.
If $H^\bullet(G)$ is quadratic,
then the algebras $H^\bullet(G)$ and $\gr\FppG$ are quadratic dual to each other, and Koszul.
\end{thm}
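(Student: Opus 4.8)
The strategy is to pin down the Hilbert series of $\gr\FppG$ from the strong-freeness hypothesis, invoke the Hilbert-series Koszulity criterion (Proposition~\ref{prop:koszul A3}(i)) to conclude that $\gr\FppG$ is Koszul, and then transport Koszulity and quadratic duality to $H^\bullet(G)$. Throughout I fix a basis $\calX=\{x_1,\dots,x_d\}$ of $S$ and defining relations $\{r_1,\dots,r_m\}$ of $G$, with $X=\{X_1,\dots,X_d\}$ and $\mathcal{R}\trianglelefteq\FpX$ the ideal generated by the initial forms $\bar r_1,\dots,\bar r_m$.

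First I would record that, since $H^\bullet(G)$ is quadratic, the two equivalent conditions of Proposition~\ref{prop:H2} hold, so each initial form $\bar r_i$ is homogeneous of degree exactly $2$; that is, $s_i=2$ for all $i$. By Proposition~\ref{prop:mild}(ii) there is an isomorphism of graded algebras $\gr\FppG\cong\FpX/\mathcal{R}=:A_\bullet$, and by the strong-freeness of the presentation (Definition~\ref{defin:mild}(a)) the Hilbert series of $A_\bullet$ is
\[
 h_{A_\bullet}(z)=\frac{1}{1-dz+mz^2},
\]
since each $s_i=2$ and there are $m$ relations. Now $A_\bullet=\bfQ_\bullet(V,\mathcal{R}_2)$ is a quadratic algebra on $d=\dim(V)$ generators with $r=m=\dim(\mathcal{R}_2)$ relations (the initial forms being linearly independent, again by strong freeness). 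Thus $h_{A_\bullet}(z)=(1-dz+mz^2)^{-1}=(1-dz+rz^2)^{-1}$, and Proposition~\ref{prop:koszul A3}(i) applies verbatim to conclude that $A_\bullet=\gr\FppG$ is Koszul.

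Next I would produce the duality. By Proposition~\ref{prop:HG}, since $H^\bullet(G)$ is quadratic, it is isomorphic to the quadratic dual of $\bfQ_\bullet(S/S_{(2)},RS_{(3)}/S_{(3)})$, and this latter algebra is exactly $\FpX/\mathcal{R}=A_\bullet\cong\gr\FppG$. Hence
\[
 H^\bullet(G)\cong A_\bullet^!\cong(\gr\FppG)^!,
\]
and dualizing once more (Remark~\ref{rem:dual}) gives $H^\bullet(G)^!\cong\gr\FppG$, which is the asserted quadratic duality. Finally, since $\gr\FppG$ is Koszul and $H^\bullet(G)$ is its quadratic dual, Proposition~\ref{prop:koszul} (Koszulity passes to and from the quadratic dual) shows that $H^\bullet(G)$ is Koszul as well. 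This completes the chain.

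**Main obstacle.** The substantive point is not any single deduction but the careful verification that the numerical data line up so that Proposition~\ref{prop:koszul A3}(i) can be applied without slack: I must know that the number of defining relations $m$ equals $\dim(\mathcal{R}_2)$, i.e.\ that the initial forms $\bar r_1,\dots,\bar r_m$ are linearly independent and genuinely generate the degree-$2$ part of the relation ideal. Strong freeness delivers precisely this through the shape of the Hilbert series, but one should confirm that the denominator $1-dz+(z^{s_1}+\dots+z^{s_m})$ specializes to $1-dz+mz^2$ exactly because every $s_i=2$ — which is where the quadraticity hypothesis on $H^\bullet(G)$, via Proposition~\ref{prop:H2}, is indispensable. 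Once the exponent collapse $s_i=2$ is secured, the remaining steps are formal invocations of the cited propositions.
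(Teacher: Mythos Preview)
Your proof is correct and follows essentially the same route as the paper's: compute the Hilbert series of $\gr\FppG\cong\FpX/\mathcal{R}$ from mildness, apply Proposition~\ref{prop:koszul A3}(i) to get Koszulity, then use Proposition~\ref{prop:HG} and Proposition~\ref{prop:koszul} to transfer both duality and Koszulity to $H^\bullet(G)$. You are in fact a bit more explicit than the paper about why all $s_i=2$ (via Proposition~\ref{prop:H2}) and why $r=m$, which is to your credit.
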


\begin{proof}
\blue{Let $d=\dim(H^1(G))$ and $m=\dim(H^2(G))$. By Proposition \ref{prop:mild}, $\gr\FppG\cong\FpX/\mathcal{R}$. By definition of mildness, 
$$h_{\gr\FppG}(z)=h_{\FpX/\mathcal{R}}=\frac{1}{1-dz+mz^2},$$
hence, by Proposition \ref{prop:koszul A3}, $\gr\FppG$ is Koszul. By Proposition \ref{prop:HG}, $H^\bullet(G)\cong(\FpX/\mathcal{R})^!$, so $H^\bullet(G)$ is Koszul as well by Proposition \ref{prop:koszul}.}
\end{proof}

The converse of Theorem~\ref{thm:mildkoszul} gives a characterization of those mild pro-$p$ groups whose $\F_p$-cohomology algebra is quadratic.

\begin{prop}[\blue{Proposition~\ref{prop:mild cohom intro}}]\label{prop:mild cohom}
 Let $G$ be a finitely generated pro-$p$ group of cohomological dimension $2$ such that the $\F_p$-cohomology algebra $H^\bullet(G,\F_p)$ is Koszul.
 Then $G$ is a mild pro-$p$ group.
\end{prop}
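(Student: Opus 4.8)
The plan is to exhibit a strongly free presentation of $G$, which by Definition~\ref{defin:mild} is exactly what mildness demands. The starting observation is that a Koszul algebra is in particular quadratic, so $H^\bullet(G)$ is quadratic; hence by Proposition~\ref{prop:H2} the cup product $H^1(G)^{\otimes 2}\to H^2(G)$ is surjective and the equality $R^p[R,S]=R\cap S_{(3)}$ holds. By Remark~\ref{rem:number relations} this forces $G$ to be finitely presented, and moreover the initial form of every defining relation is homogeneous of degree $2$. I would then fix a minimal presentation \eqref{eq:presentation} with a basis $\calX=\{x_1,\dots,x_d\}$ of $S$ and defining relations $\{r_1,\dots,r_m\}$, where $d=\dim H^1(G)$ and $m=\dim H^2(G)$, and set $A_\bullet=\FpX/\mathcal{R}$ with $\mathcal{R}$ the two-sided ideal generated by the degree-$2$ initial forms $\bar r_1,\dots,\bar r_m$. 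The goal reduces to verifying the Hilbert-series identity $h_{A_\bullet}(z)=(1-dz+mz^2)^{-1}$, since all exponents $s_i=2$ make this precisely condition \eqref{eq:hilbert} for strong freeness.

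First I would identify the cohomology algebra with the quadratic dual of $A_\bullet$: as $H^\bullet(G)$ is quadratic and $G$ is finitely presented, Proposition~\ref{prop:HG} gives $H^\bullet(G)\cong A_\bullet^!$, and hence $A_\bullet\cong H^\bullet(G)^!$ by Remark~\ref{rem:dual}. Next I would exploit $\cd(G)=2$: the cohomology is concentrated in degrees $0,1,2$, so $h_{H^\bullet(G)}(z)=1+dz+mz^2$. Since $H^\bullet(G)$ is Koszul, Proposition~\ref{prop:koszul A3}(ii) applied to it yields $h_{H^\bullet(G)}(z)\cdot h_{A_\bullet}(-z)=1$, whence $h_{A_\bullet}(z)=(1-dz+mz^2)^{-1}$. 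This is exactly \eqref{eq:hilbert} with every $s_i=2$, so the sequence $(\bar r_1,\dots,\bar r_m)$ is strongly free, the chosen presentation is strongly free, and therefore $G$ is mild.

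The argument is essentially a matter of aligning the right tools, so rather than a genuine obstacle the points needing care are the verifications that the hypotheses of Propositions~\ref{prop:H2} and \ref{prop:HG} are in force (quadraticity of $H^\bullet(G)$ and finite presentability of $G$), together with the bookkeeping that $\dim H^2(G)$ equals the number $m$ of defining relations feeding the Hilbert series. The conceptual heart of the proof is the role of Koszul duality: it is what converts the purely cohomological finiteness hypothesis $\cd(G)=2$ — equivalently, the vanishing of $H^\bullet(G)$ above degree $2$ — into the precise denominator $1-dz+mz^2$ characterizing strong freeness. Without Koszulity one would only control the Hilbert series up to a coefficientwise inequality, and the exact equality defining strong freeness could fail.
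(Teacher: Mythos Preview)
Your proof is correct and follows essentially the same route as the paper: identify $A_\bullet=\FpX/\mathcal{R}$ with $H^\bullet(G)^!$ via Proposition~\ref{prop:HG}, use $\cd(G)=2$ to write $h_{H^\bullet(G)}(z)=1+dz+mz^2$, and then invoke the Koszul Hilbert-series identity from Proposition~\ref{prop:koszul A3}(ii) to obtain $h_{A_\bullet}(z)=(1-dz+mz^2)^{-1}$, which is precisely the strong-freeness condition. Your treatment is in fact slightly more careful than the paper's in explicitly verifying finite presentability via Remark~\ref{rem:number relations} before applying Proposition~\ref{prop:HG}.
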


\begin{proof}
Let \eqref{eq:presentation} be a minimal presentation of $G$, with a set of defining relations $\{r_1,\ldots,r_m\}$, with $m=\dim(R/R\cap S_{(3)})$.
Also, set $X=\{X_1,\ldots,X_d\}$, with $d=\dim(S/S_{(2)})$, and let $\rho_1,\ldots,\rho_m\in\FpX$ be the initial forms of the relations $r_1,\ldots,r_m$.

Since $H^\bullet(G)$ is Koszul, in particular it is a quadratic algebra, and by Proposition~\ref{prop:H2} the initial forms 
$\rho_1,\ldots,\rho_m\in\FpX$ are homogeneous polynomials of degree $2$.
Moreover, by Proposition~\ref{prop:HG}, one has an isomorphism of quadratic algebras
\begin{equation}\label{eq:proof mild cohom}
 A_\bullet:=\frac{\FpX}{(\rho_1,\ldots,\rho_m)}  \cong H^\bullet(G)^!,
\end{equation}
and by Proposition~\ref{prop:koszul} also the algebra $A_\bullet$ is Koszul.
By Proposition~\ref{prop:koszul A3} and by \eqref{eq:proof mild cohom}, one has
\[
 h_{A_\bullet}(z)=\frac{1}{h_{H^\bullet(G)}(-z)}=\frac{1}{1-dz+mz^2}.
\]
Namely, the sequence $(\rho_1,\ldots,\rho_m)$ is strongly free, and \eqref{eq:presentation} is a strongly free presentation.
Hence, $G$ is mild.
\end{proof}


\section{Examples}\label{sec:examples mild}

The results in \S~\ref{ssec:mild koszul} provide a big wealth of examples of pro-$p$ groups $G$ such that $H^\bullet(G)$ and $\gr\F_p[G]$ are Koszul, and quadratic dual to each other.

\begin{exam}[Demushkin groups]\label{ex:demushkin mild}\rm
Infinite Demushkin groups are mild (cf.~\cite{jochen:mild}*{Thm.~6.3}), and by definition they have quadratic
$\F_p$-cohomology  (see, e.g., \cite{nsw:cohn}*{Def.~3.9.9}).
In particular, if $G$ is a Demushkin group with $\dim(G/G_{(2)})=d$, then $H^\bullet(G)\cong \bfQ_\bullet(V,\Omega)$, where $V$ is a vector space of dimension $d$ with basis $\{a_1,\ldots,a_d\}$, and one of the following holds:
\begin{itemize}
 \item[(a)] $d$ is even, and $\Omega$ is generated by the quadratic relations
 \[
\left\{\begin{array}{c}
                a_1a_2=-a_2a_1=a_3a_4=-a_4a_3=\ldots=a_{d-1}a_d=-a_da_{d-1} \\
                a_ia_j=0\text{ for }(i,j)\neq(1,2),(2,1),(3,4),(4,3),\ldots,(d,d-1)
                   \end{array}  \right\} \]
                   (if $p\neq 2$ this is always the case);
\item[(b)] $d$ is even, $p=2$, and $\Omega$ is generated by the quadratic relations
\[
\left\{\begin{array}{c}
                a_1^2=a_1a_2=-a_2a_1=a_3a_4=-a_4a_3=\ldots=a_{d-1}a_d=-a_da_{d-1} \\
                a_ia_j=0\text{ for }(i,j)\neq(1,1),(1,2),(2,1),(3,4),(4,3),\ldots,(d,d-1)
                   \end{array}  \right\}; \]
\item[(b)] $d$ is odd, $p=2$, and $\Omega$ is generated by the quadratic relations
\[
\left\{\begin{array}{c}
                a_1^2=a_2a_3=-a_3a_2=a_4a_5=-a_5a_4=\ldots=a_{d-1}a_d=-a_da_{d-1} \\
                a_ia_j=0\text{ for }(i,j)\neq(1,1),(2,3),(3,4),(4,5),(5,4),\ldots,(d,d-1)
                   \end{array}  \right\}. \]
                   \end{itemize}
Hence, Theorem~\ref{thm:mildkoszul} gives another proof of the fact that the cohomology and the complete graded group algebra of an infinite Demushkin groups 
are quadratically dual to each other and Koszul (cf.~\cite{MPQT}*{\S~4.1}).
\end{exam}

The following example generalizes Demushkin groups.
\begin{exam}[One-relator pro-$p$ groups]\label{ex:1rel}\rm
\blue{Let} $S$ \blue{be} a finitely generated \blue{free} pro-$p$ group\blue{, with $p\neq2$,} \blue{let} $r\in S_{(2)}\smallsetminus S_{(3)}$, \blue{and let $R$ be the normal subgroup generated by $r$.}
Then the quotient $G=S/R$ is a mild pro-$p$ group
and it has quadratic $\F_p$-cohomology (cf.~\cite{cq:onerel}*{\S~4}.)
Hence, Theorem \ref{thm:mildkoszul}
gives another proof of the fact that the cohomology and the complete graded group algebra of such a one-relator pro-$p$ group 
are quadratic dual to each other and Koszul (cf.~\cite{cq:onerel}*{Prop.~4.6 and Prop.~4.9}).

\blue{\texttt{OPTIONAL }The combinatorial reason for the limitation $p\neq2$ is related to the fact that pro-$2$ groups may have first cohomology classes $\chi$ such that $\chi\cup\chi\neq0$; this may cause higher cohomology groups to be non-trivial, contradicting mildness (Prop.~\ref{prop:mild cohom}). At the extreme, the group $C_2=\langle x\mid x^2\rangle$, which is the only finite Demushkin group, has cohomology isomorphic to the polynomial $\F_2$-algebra in one variable.}
\end{exam}

%

\subsection{Testing for the quadraticity of cohomology}\label{ssec:quadraticity}
In general, thanks to the fact that any mild pro-$p$ group $G$ has cohomological dimension $2$, verifying the quadraticity of $H^\bullet(G)$ in Theorem \ref{thm:mildkoszul} amounts at checking only two conditions:
\begin{itemize}
\item[(a)] That each element of (a basis of) $H^2(G)$ is a linear combination of cup products of elements in $H^1(G)$. Since $\dim(H^2(G))$ equals the number of defining relations, this often reduces to checking that the initial forms of the defining relations are polynomials of degree $2$ and, as such, are linearly independent (cf.~e.g.~\cite{bglv:2rel}*{Prop.~1}). Dually, one can check that the defining relations induce cup products of elements of $H^1(G)$ which are linearly independent in $H^2(G)$.
\item[(b)] That, for every $\chi,\psi,\varphi\in H^1(G)$, the cup product $\chi\cup\psi\cup\varphi$, which is zero as $\cd (G)=2$, is zero \emph{purely as a consequence of what happens in $H^2(G)$}. In more formal terms, letting $\pi_\bullet: \bfT_\bullet(H^1(G))\to H^\bullet(G)$ be the canonical projection of graded algebras, $H^3(G)$ is included in the ideal $(\ker(\pi_2))$.
\end{itemize}

\begin{exam}\label{exam:mild quadratic}\rm
\blue{As an illustrative example, let $G$ be the pro-$p$ group with presentation
\[
 G=\left\langle\: x_1,\ldots,x_d\:\mid\: [x_1,x_2]=[x_1,x_3]=[x_2,x_3][x_4,x_5]=1 \:\right\rangle.
\]
In order to verify the mildness of $G$, we use {\sl Anick's criterion} \cite{anick}*{Thm.~3.2} (cf.~also \cite{jochen:mild}*{\S~3}).
The initial forms of the defining relations are
\begin{equation}\label{eq:initial forms}
 \rho_1=[X_1,X_2],\qquad \rho_2=[X_1,X_3],\qquad\rho_3=[X_2,X_3]+[X_4,X_5].
\end{equation}
We define the total order $X_1\prec X_2 \prec X_3\prec X_4\prec X_5$ and we extend it to the degree-lexicographic total order on the monomials in the indererminates $X=\{X_1,\ldots,X_5\}$, that is,
\[
\begin{array}{c}
X_{i_1}\cdots X_{i_a}\prec X_{j_1}\cdots X_{j_b}\\
\Updownarrow\\
a<b\mbox{ or }(a=b\mbox{ and }\exists k\in \{1,\dots,a\}(
X_{i_1}=X_{j_1},\dots,X_{i_{k-1}}=X_{j_{k-1}}, X_{i_k}\prec X_{j_k})).
\end{array} 
\]
The {\sl leading monomials} of the initial forms \eqref{eq:initial forms}, that is, the monomials with higher degree-lexicographic order,
are respectively $X_2X_1$, $X_3X_1$ and $X_5X_4$.
The sequence of the leading monomials is {\sl combinatorially free} (no leading monomial is a submonomial of a different one, and the beginning of a leading monomial is not the same as the ending of another, or the same, monomial, cf.~\cite{anick}*{\S~3}).
Hence, by Anick's criterion, the sequence $(\rho_1,\rho_2,\rho_3)$ is strongly free and therefore $G$ is mild. Proposition~\ref{prop:mild} gives $\gr\FppG\cong\FpX/\mathcal{R}$,
with $\mathcal{R}$ the two-sided ideal of $\FpX$ generated by $\rho_1,\rho_2,\rho_3$.

Let $\{\chi_1,\ldots,\chi_5\}$ be the basis of $H^1(G)$ dual to $\calX=\{x_1,\ldots,x_5\}$.
Then the set of the cup products dually associated to the defining relations is 
$\{chi_1\cup\chi_2,\chi_1\cup\chi_3,\chi_2\cup\chi_3\}$ (noticing that $\chi_4\cup\chi_5=\chi_2\cup\chi_3$ by the third relation). By applying the trace form of \cite{nsw:cohn}*{Prop.~3.9.13 (ii)} to a null linear combination of $\chi_1\cup\chi_2,\chi_1\cup\chi_3,\chi_2\cup\chi_3$, we see that the three cup products are linearly independent in $H^2(G)$. Since $\dim(H^2(G))=3$, this verifies Condition (a) above.
The usual graded-commutativity of the cup product gives
\[
\begin{matrix}
\chi_2\cup\chi_1=-\chi_1\cup\chi_2; & \chi_3\cup\chi_1=-\chi_1\cup\chi_3; \\
\chi_3\cup\chi_2=-\chi_2\cup\chi_3; & \chi_4\cup\chi_5=-\chi_5\cup\chi_4.
\end{matrix}
\]
\blue{All the remaining cup products} $\chi_i\cup\chi_j$ are zero, which automatically forces the triple cup products $\chi_i\cup\chi_j\cup\chi_k$ to be zero if $(i,j,k)$ is not a permutation of $(1,2,3)$. All the triple cup products such that $(i,j,k)$ is a permutation of $(1,2,3)$ are equal to $\pm(\chi_1\cup\chi_2\cup\chi_3)$ by graded-commutativity. Finally, one has
$$\blue{\chi_1\cup\chi_2\cup\chi_3=\chi_1\cup\chi_4\cup\chi_5=0\cup\chi_5=0.}$$
It is important to notice that the fact that all triple cup products are zero depends only on the linear structure of $H^2(G)$, that is, on the linear dependencies between double cup products. In other words, all triple cup products can be rewritten as linear combinations
\[
\sum_i k(\xi_i\cup\psi_i\cup\varphi_i),\qquad k\in\F_p,\quad\xi_i,\psi_i,\varphi_i\in H^1(G)
\]
such that, for all $i$, at least one among $\xi_i\cup\psi_i,\xi_i\cup\varphi_i,\psi_i\cup\varphi_i$ is already zero in $H^2(G)$.
This completes the verification of Condition (b).
Thus, $H^\bullet(G)$ is quadratic.

Theorem \ref{thm:mildkoszul} can therefore be applied, granting that $H^\bullet(G)$ and $\gr\FppG$ are Koszul and quadratic dual to each other.}
\end{exam}

With a similar argument, we can extend Example \ref{ex:1rel} to pro-$p$ groups with $2$ defining relations, provided $p$ is odd.
\begin{exam}[Mild 2-relator pro-$p$ groups]\label{ex:2rel}\rm
For $p\neq2$, let $S$ be a free pro-$p$ group with basis $\calX=\{x_1,\ldots,x_d\}$, and let
$r_1,r_2\in S_{(2)}\smallsetminus S_{(3)}$ be two elements such that their initial forms $\rho_1,\rho_2\in S_{(2)}/S_{(3)}$
are linearly independent.
Then by Proposition~\ref{prop:H2} and \cite{bglv:2rel}*{Thm.~1}, the quotient $G=S/R$, with $R$ the normal subgroup
generated by $r_1,r_2$, is a mild pro-$p$ group.

For $X=\{X_1,\ldots,X_d\}$ the set of indeterminates associated to $\calX$, one may choose $r_1,r_2$ such that
\[\begin{split}
   \rho_1 &= [X_1,X_2]+\sum_{\substack{i<j \\ (i,j)\succ(1,2)}}a_{ij}[X_i,X_j], \qquad \blue{a_{ij}\in\F_p,}\\
   \rho_2 &= [X_{s},X_t]+\sum_{\substack{i<j \\ (i,j)\succ(s,t)} }b_{ij}[X_i,X_j], \qquad \blue{b_{ij}\in\F_p,}  \end{split} \]
where $(s,t)\succ(1,2)$ and $a_{st}=0$ (here \blue{again} $\succ$ \blue{denotes} the \blue{degree-}lexicographic order on the set
of \blue{monomials in $X$}.
Let $\{\chi_1,\ldots,\chi_d\}$ be the basis of $H^1(G)$ dual to $\calX$.
Then by \cite{nsw:cohn}*{Prop.~3.9.13 (ii)} $\{\chi_1\cup\chi_2,\chi_s\cup\chi_t\}$ is a basis of $H^2(G)$.
\blue{A case by case analysis on triple cup products shows that Condition (b) above is satisfied. Hence $G$ has quadratic $\F_p$-cohomology, which is quadratic dual to $\gr\FppG\cong\FpX/(\rho_1,\rho_2)$, and $H^\bullet(G)$ and $\gr\FppG$ are koszul} (cf.~\cite{cq:2relUK}).
\end{exam}

\subsection{Groups and algebras from graphs}\label{ssec:graphs}
\blue{The reason why Condition (b) holds in Example \ref{ex:2rel} can be traced down to a shortage of pairs $(i,j)$ such that $\chi_i\cup\chi_j\neq 0$ in the second cohomology group. The idea can be captured systematically with the language of graphs.}

A finite simplicial graph $\Gamma=(\euV=\{v_1,\ldots,v_d\},\euE)$ is said to be {\sl triangle-free} if it has no ``triangles'' as induced subgraphs, i.e., \blue{$\euE$ does not include any triple
$ \{\{v_i,v_j\},\{v_j,v_k\},\{v_k,v_i\}\},$ for distinct $i,j,k\in\{1,\ldots,d\}.$}

\begin{exam}[Generalised right-angled Artin pro-$p$ groups]\label{ex:RAAG mild}\rm
A {\sl generalised right-angled Artin pro-$p$ group} {({\sl generalised pro-$p$ RAAG} for short)} associated to a graph $\Gamma=(\euV=\{v_1,\ldots,v_d\},\euE)$ is a pro-$p$ group $G$ with presentation
\[
 G=\left\langle\: v_1,\ldots,v_d\:\mid\: [v_i,v_j]=v_i^{\lambda_{ij}}v_j^{\mu_{ij}},\: i<j ,\:\{v_i,v_j\}\in\mathcal{E}\: \right\rangle,
\]
with $\lambda_{ij},\mu_{ij}\in p\Z_p$ for every $i,j$, and moreover $\lambda_{ij},\mu_{ij}\in 4\Z_2$ if $p=2$ (cf.~\cite{qsv:quad}*{\S~5.2}).
 If $\Gamma$ is triangle-free, then by \cite{qsv:quad}*{Thm.~F} $G$ is mild and one has \blue{an} isomorphisms of quadratic algebras
\[
 H^\bullet(G)\cong \bfLambda_\bullet(\Gamma),\qquad \gr\F_p[G]\cong \bfA_\bullet(\Gamma).
\]
\end{exam}

Some mild {generalised pro-$p$-RAAGs} with quadratic $\F_p$-cohomology are known not to occur as maximal pro-$p$ Galois groups of fields containing a root of 1 of order $p$.

\begin{exam}\label{exs:noGalois}\rm
Let $\Gamma=(\euV,\euE)$ be the simplicial graph with 
$$\euV=\{v_1,v_2,v_3,v_4\},\qquad \euE=\left\{\:\{v_1,v_2\},\{v_2,v_3\},\{v_3,v_4\},\{v_4,v_1\}\:\right\}$$
(i.e., $\Gamma$ is a square), and let $G$ be the associated { generalised pro-$p$-RAAG} with 
$\lambda_{ij}=\mu_{ij}=0$ for all $i,j$ --- namely, $G$ is isomorphic to the { direct} product of two $2$-generated free pro-$p$ groups.
By Example~\ref{ex:RAAG mild}, $G$ is mild, and in particular
\[
 \gr\F_p[G]\cong H^\bullet(G)^!\cong\frac{\FpX}{\left(\:[X_1,X_2],[X_2,X_3],[X_3,X_4],[X_4,X_1]\:\right)}. 
\]
Still, by \cite{cq:bk}*{Thm~5.6} $G$ does not occur as maximal pro-$p$ quotient of the absolute Galois
group of a field containing a root of unity of order $p$.
\end{exam}

\begin{exam}\label{exs:noGalois2}\rm
Let $\Gamma=(\euV,\euE)$ be the simplicial graph with 
$$\euV=\{v_1,v_2,v_3,v_4\},\qquad\euE=\left\{\:\{v_1,v_2\},\{v_2,v_3\},\{v_3,v_4\}\:\right\},$$
and let $G$ be the associated { generalised pro-$p$-RAAG} with 
$\lambda_{ij}=\mu_{ij}=0$ for all $i,j$.
By Example~\ref{ex:RAAG mild}, $G$ is mild, and in particular
\[
 \gr\F_p[G]\cong H^\bullet(G)^!\cong\frac{\FpX}{\left(\:[X_1,X_2],[X_2,X_3],[X_3,X_4]\:\right)}. 
\]
Still, by \cite{ip:RAAGs}*{Thm~1.2} $G$ does not occur as maximal pro-$p$ quotient of the absolute Galois
group of a field containing a root of unity of order $p$.
\end{exam}


\subsection{The cup-product criterion}\label{ssec:cupproductcrit}
\blue{There is a significant} cohomological criterion to check whether a finitely generated pro-$p$ group is mild \blue{independently of the choice of a group presentation (\cite{Schmidt}*{Thm.~5.5}, \cite{LM:mild2} and \cite{forre}, see also \cite{jochen:mild}*{\S~1}).}

\begin{prop}
Let 
$G$ be a finitely generated pro-$p$ group with $H^2(G)\neq0$ and such that \blue{$H^1(G)$ has a vector space decomposition} $H^1(G)=U\oplus W$  satisfying the following:
\begin{itemize}
 \item[(i)] The cup product induces a surjective map $\blue{U}\otimes W\to H^2(G)$; and 
 \item[(ii)] $\chi\cup\psi=0$ for every $\chi,\psi\in \blue{U}$. 
\end{itemize}
Then $G$ is mild.
\end{prop}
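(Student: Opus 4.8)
The plan is to deduce mildness from the Koszulity criterion (Theorem~\ref{thm:Koszulcrit}) together with the Hilbert-series duality for Koszul algebras (Proposition~\ref{prop:koszul A3}). First I would fix a minimal presentation \eqref{eq:presentation} of $G$, with basis $\calX=\{x_1,\ldots,x_d\}$ of $S$ (so that $d=\dim H^1(G)$) and a set of defining relations $\{r_1,\ldots,r_m\}$. Since $U\otimes W\subseteq H^1(G)^{\otimes2}$, condition~(i) forces the full cup product $\cup\colon H^1(G)^{\otimes2}\to H^2(G)$ to be surjective; hence Proposition~\ref{prop:H2} applies, the initial forms $\rho_1,\ldots,\rho_m\in\FpX$ are homogeneous of degree $2$, and by Remark~\ref{rem:number relations} there are only finitely many of them, with $m=\dim H^2(G)$. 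In particular $G$ is finitely presented, and the whole problem is reduced to showing that the sequence $(\rho_1,\ldots,\rho_m)$ is strongly free, i.e. that $h_{\FpX/(\rho_1,\ldots,\rho_m)}(z)=(1-dz+mz^2)^{-1}$.

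Next I would introduce the quadratic algebra $A_\bullet:=\bfQ_\bullet(H^1(G),\Omega)$, where $\Omega:=\ker(\cup)\subseteq H^1(G)^{\otimes2}$. By surjectivity of $\cup$ the multiplication $A_1\otimes A_1\to A_2$ is identified with the cup product, so $A_2\cong H^2(G)$ and $\dim A_2=m$. Under this identification conditions~(i) and~(ii) translate \emph{verbatim} into the hypotheses of Theorem~\ref{thm:Koszulcrit} for the decomposition $A_1=U\oplus W$, namely $U\cdot W=A_2$ and $U\cdot U=0$. Theorem~\ref{thm:Koszulcrit} then yields that $A_\bullet$ is Koszul, and by Proposition~\ref{prop:koszul} so is the quadratic dual $A_\bullet^!=\bfQ_\bullet(S/S_{(2)},\Omega^\perp)$ (using $H^1(G)^*\cong S/S_{(2)}$).

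The crux is to compute $h_{A_\bullet}(z)$, and I would show that $A_n=0$ for every $n\geq3$. As $A_\bullet$ is generated in degree $1$, it suffices to prove $A_3=A_1\cdot A_2=0$. From $A_2=U\cdot W$ and $U\cdot U=0$ one gets $U\cdot A_2=(U\cdot U)\cdot W=0$; and for the $W$-part, writing any $wu\in W\cdot U\subseteq A_2=U\cdot W$ as a sum of products in $U\cdot W$ gives $w\cdot(uw')=(wu)\,w'\in(U\cdot W)\cdot W=U\cdot(W\cdot W)\subseteq U\cdot A_2=0$, so $W\cdot A_2=0$ as well. Hence $A_{\geq3}=0$ and $h_{A_\bullet}(z)=1+dz+mz^2$. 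Feeding this into the Hilbert-series identity of Proposition~\ref{prop:koszul A3}(ii) for the Koszul algebra $A_\bullet$ yields $h_{A_\bullet^!}(z)=h_{A_\bullet}(-z)^{-1}=(1-dz+mz^2)^{-1}$.

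It then remains to identify $A_\bullet^!$ with $\FpX/(\rho_1,\ldots,\rho_m)$. This rests on the duality between the cup product and the initial forms of the defining relations --- the orthogonality $\Omega^\perp=RS_{(3)}/S_{(3)}=\mathrm{Span}_{\F_p}\{\rho_1,\ldots,\rho_m\}$ that underlies Proposition~\ref{prop:HG} --- which is a statement about degree-$2$ data and holds without assuming $H^\bullet(G)$ quadratic. Granting it, $A_\bullet^!=\FpX/(\rho_1,\ldots,\rho_m)$, so the previous paragraph gives exactly the strongly free Hilbert series; thus $(\rho_1,\ldots,\rho_m)$ is strongly free, the chosen presentation is a strongly free presentation, and $G$ is mild. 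I expect this last identification to be the only delicate point: applying Theorem~\ref{thm:Koszulcrit} is immediate once conditions~(i)--(ii) are read in $A_\bullet$, and the vanishing $A_{\geq3}=0$ is elementary, whereas pinning down $\Omega^\perp$ as the span of the initial forms is where the cohomological bookkeeping --- and the care required because $H^\bullet(G)$ is not yet known to be quadratic --- genuinely enters.
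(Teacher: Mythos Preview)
The paper does not prove this proposition at all: it is stated as a known result and attributed to Schmidt, Labute--Min\'a\v{c} and Forr\'e (with a reference to G\"artner). The original proofs in those references work directly with presentations and filtrations, without any appeal to Koszulity. Your argument is therefore not a reconstruction of the paper's proof but a genuinely different derivation, and it is correct: you feed the hypotheses into the paper's own Koszulity criterion (Theorem~\ref{thm:Koszulcrit}), show $A_{\geq3}=0$, read off the Hilbert series of $A_\bullet^!$ via Proposition~\ref{prop:koszul A3}(ii), and identify $A_\bullet^!$ with $\FpX/(\rho_1,\ldots,\rho_m)$ to conclude strong freeness. This closes a loop the paper leaves implicit---it shows that the abstract Koszulity criterion the authors extract ``inspired by'' the cup-product criterion is in fact strong enough to recover it.

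Two minor remarks. First, the version of the Koszulity criterion actually proved in \S\ref{ssec:cupproductcrit} already includes the conclusion $A_3=0$ (phrased as $\cd(A_\bullet^!)=2$), so your separate computation of $A_{\geq3}=0$ is redundant if you cite that version rather than the introductory statement Theorem~\ref{thm:Koszulcrit}. Second, the point you flag as delicate---that $\Omega^\perp=\mathrm{Span}_{\F_p}\{\rho_1,\ldots,\rho_m\}$ without assuming $H^\bullet(G)$ quadratic---is indeed the one genuine input beyond Theorem~\ref{thm:Koszulcrit}, but it is exactly the degree-$2$ content of the trace-form computation (\cite{nsw:cohn}*{Prop.~3.9.13}) underlying Propositions~\ref{prop:H2} and~\ref{prop:HG}; surjectivity of the cup product gives $\dim\Omega^\perp=m=\dim\mathrm{Span}\{\rho_i\}$, and the trace pairing gives the inclusion, so no quadraticity hypothesis on $H^\bullet(G)$ is needed.
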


\blue{Inspired by the} above result\blue{, we establish }the following criterion for Koszulity, which holds in full generality for ``abstract'' quadratic algebras.

\begin{thm}
 Let $A_\bullet$ \blue{and $A^!_\bullet$ be a pair of quadratic dual }quadratic $\Bbbk$-algebra\blue{s}, and suppose that $A_1$ admits a decomposition as direct sum of vector spaces $A_1=U\oplus W$ such that:
 \begin{itemize}
  \item[(i)] $A_2=U\cdot W$;
  \item[(ii)] $U\cdot U=0$.
 \end{itemize}
Then $A_\bullet$ and $A^!_\bullet$ are Koszul and $\cd(A^!)=2$.
\end{thm}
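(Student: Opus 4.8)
The plan is to establish Koszulity of the quadratic dual $A^!_\bullet$ first, because its defining relations admit a transparent description, and then transfer Koszulity back to $A_\bullet$ by quadratic duality. Fix bases $\{u_1,\dots,u_s\}$ of $U$ and $\{w_1,\dots,w_t\}$ of $W$, and let $\{x_1,\dots,x_s\}\cup\{y_1,\dots,y_t\}$ be the dual basis of $A_1^*=U^*\oplus W^*$, with $x_i$ dual to $u_i$ and $y_k$ dual to $w_k$. Writing $A_\bullet=\bfQ_\bullet(A_1,\Omega)$, condition (ii) reads $U\otimes U\subseteq\Omega$, hence $\Omega^\perp\subseteq(U^*\otimes W^*)\oplus(W^*\otimes U^*)\oplus(W^*\otimes W^*)$; and condition (i), namely $U\otimes W+\Omega=A_1\otimes A_1$, dualizes to $\Omega^\perp\cap[(U^*\otimes U^*)\oplus(W^*\otimes U^*)\oplus(W^*\otimes W^*)]=0$, so the projection of $\Omega^\perp$ onto $U^*\otimes W^*$ is injective. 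Consequently a basis of $\Omega^\perp$ can be written as $r_\alpha=\xi_\alpha+\eta_\alpha$ with $\xi_\alpha\in U^*\otimes W^*$ linearly independent and $\eta_\alpha\in(W^*\otimes U^*)\oplus(W^*\otimes W^*)$.

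Next I would run a Gröbner-basis argument. Order the generators by $y_1\prec\cdots\prec y_t\prec x_1\prec\cdots\prec x_s$ and extend to the degree-lexicographic order on monomials. After Gaussian elimination on the $\xi_\alpha$ (which lie in the span of the ``mixed'' monomials $x_iy_k$), one may assume the $\xi_\alpha$ have pairwise distinct leading monomials $x_{i(\alpha)}y_{k(\alpha)}$; since every monomial occurring in $\eta_\alpha$ begins with some $y_k$, it is smaller than $x_{i(\alpha)}y_{k(\alpha)}$, so the latter is the leading monomial of $r_\alpha$. The decisive point is that these leading monomials are pairwise non-overlapping: each has the shape (an $x$)(a $y$), so a proper suffix is a single $y$ while a proper prefix is a single $x$, and these can never coincide; likewise none is a subword of another. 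Hence $\{r_\alpha\}$ has no ambiguities and is a quadratic Gröbner basis of the ideal $(\Omega^\perp)$. By the standard fact that a quadratic algebra admitting a quadratic Gröbner basis of relations is Koszul (cf.~\cite{lodval:book}*{Ch.~4} and \cite{poliposi:book}*{Ch.~4}), $A^!_\bullet$ is Koszul, and then $A_\bullet=(A^!_\bullet)^!$ is Koszul by Proposition~\ref{prop:koszul} and Remark~\ref{rem:dual}.

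Finally, for the cohomological dimension I would first show $A_n=0$ for all $n\geq3$. Using associativity with (i) and (ii) one computes $U\cdot A_2=U\cdot(U\cdot W)=(U\cdot U)\cdot W=0$; and, since $W\cdot U\subseteq A_2=U\cdot W$ and $W\cdot W\subseteq A_2=U\cdot W$, one finds $W\cdot A_2=(W\cdot U)\cdot W\subseteq(U\cdot W)\cdot W=U\cdot(W\cdot W)\subseteq U\cdot(U\cdot W)=0$. As $A_\bullet$ is generated in degree $1$, this gives $A_3=A_1\cdot A_2=0$ and hence $A_n=0$ for $n\geq3$. Since $A^!_\bullet$ is Koszul, Proposition~\ref{prop:koszul} yields $\mathrm{Ext}^{i,i}_{A^!_\bullet}(\Bbbk,\Bbbk)\cong((A^!_\bullet)^!)_i=A_i$ while $\mathrm{Ext}^{i,j}_{A^!_\bullet}(\Bbbk,\Bbbk)=0$ for $i\neq j$; therefore $\cd(A^!_\bullet)=\max\{\,i:A_i\neq0\,\}=2$, the last equality using $A_2\neq0$ (implicit in the statement, as in the cup-product criterion).

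The main obstacle is that one cannot simply compute the Hilbert series of $A^!_\bullet$ and invoke Proposition~\ref{prop:koszul A3}(i): that criterion demands the ``generic'' series $(1-dz+rz^2)^{-1}$, which holds only when $\dim A_2=st$, i.e.\ when there are no relations among the products $u\cdot w$. In general $\dim A_2$ may be strictly smaller, so Koszulity must be proved by a method insensitive to this degeneracy. The non-overlapping leading monomials produced by dualizing conditions (i) and (ii) furnish exactly such a method, valid uniformly whether or not $A_2$ has full dimension.
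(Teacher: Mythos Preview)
Your argument is correct and takes a genuinely different---and cleaner---route than the paper. The paper applies the Rewriting Method of \cite{lodval:book}*{\S~4.1} directly to $A_\bullet$: with the ordering $u_1\prec\cdots\prec u_a\prec w_1\prec\cdots\prec w_b$, conditions (i) and (ii) yield a rewriting system that reduces every degree-$2$ monomial to a combination of monomials $u_iw_j$, but this system has eight families of critical monomials, and the paper verifies confluence of each by drawing the corresponding rewriting graph. You instead dualize first and work in $A^!_\bullet$: the duals of (i) and (ii) force each relation in $\Omega^\perp$ to have leading monomial of shape $x_iy_k$ (for your ordering $y\prec x$), and such monomials have \emph{no} overlaps whatsoever---a proper prefix is an $x$, a proper suffix is a $y$---so there are no critical monomials at all and $A^!_\bullet$ is PBW automatically. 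Your approach thus trades the paper's eight-case confluence analysis for a single combinatorial observation, at the modest cost of dualizing the hypotheses; it also makes transparent why the argument is insensitive to possible extra relations among the products $u_iw_j$ (type (d) in the paper's list), which is precisely the degeneracy you flag in your final paragraph. Your treatment of $A_3=0$ and of $\cd(A^!_\bullet)=2$ is likewise direct, and in fact slightly more careful than the paper's about the implicit hypothesis $A_2\neq0$.
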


\blue{We will show the Koszulity of $A_\bullet$ using the {\sl Rewriting Method} of \cite{lodval:book}*{\S~4.1}. For the interested reader, we mention in passing that this technique proves an even stronger property for $A_\bullet$, the PBW property,
which means that the ideals of $A_\bullet$ and $A^!_\bullet$ generated by the respective spaces of relators have quadratic Gr\"obner bases \cite{lodval:book}*{\S~4.3}.

\begin{proof}
Let $\{u_1,\dots,u_a\}$ be a basis of $U$ and $\{w_1,\dots,w_b\}$ be a basis of $W$. We order monomials according to the degree-lexicographic extension of the total ordering $v_1<v_2<\dots<v_a<w_1<w_2<\dots<w_b$. After some linear algebra manipulations (of the type leading to a {\sl normalized form}, in the sense of \cite{lodval:book}*{\S~4.1}), the conditions (i) and (ii) can be translated into the following set of defining relations for $A$, with the leading monomials written on the left hand side:
\begin{enumerate}
\item[(a)] $w_iw_j=\sum_t \lambda_tu_{a_t}w_{b_t}$ for all $i,j=1,\dots,b$, and suitable $\lambda_t\in \F$ (by (i));
\item[(b)] $w_iu_j=\sum_t \mu_tu_{r_t}w_{s_t}$ for all $i=1,\dots,b$ and $j=1,\dots,a$, and suitable $\mu_t\in \F$ (by (i));
\item[(c)] $u_iu_j=0$ for all $i,j=1,\dots,a$ (by (ii));
\item[(d)] possibly additional relations $u_iw_j=\sum_t \eta_tu_{l_t}w_{h_t}$ for some $i=1,\dots,a$ and $j=1,\dots,b$, and suitable $\eta_t\in \F$.
\end{enumerate}
The resulting critical monomials are
\begin{enumerate}
\item $w_iw_jw_k$, $\mbox{ all }i,j,k=1,\dots,b$;
\item $w_iw_ju_k$,  $\mbox{ all }i,j=1,\dots,b,\mbox{ all }k=1,\dots,a$;
\item $w_iu_ju_k$, $\mbox{ all }i=1,\dots,b, \mbox{ all }j,k=1,\dots,a$;
\item (possibly) $w_iu_jw_k$, $\mbox{ all }i=1,\dots,b,\mbox{ some }j=1,\dots,a,\mbox{ some }k=1,\dots,b$;
\item $u_iu_ju_k$, $\mbox{ all }i,j,k=1,\dots,a$;
\item (possibly) $u_iu_jw_k$, $\mbox{ all }i=1,\dots,a,\mbox{ some }j=1,\dots,a,\mbox{ some }k=1,\dots,b$;
\item (possibly) $u_iw_jw_k$, $\mbox{ some }i=1,\dots,a,\mbox{ some }j=1,\dots,b,\mbox{ all }k=1,\dots,b$;
\item (possibly) $u_iw_ju_k$, $\mbox{ some }i=1,\dots,a,\mbox{ some }j=1,\dots,b,\mbox{ all }k=1,\dots,a$.
\end{enumerate}
The corresponding rewriting graphs are as follows. We take some innocent freedom in naming indices in order to simplify the notation, as the only thing that matters is the ordered sequence of elements form $U$ and from $W$, not the particular elements themselves.
\begin{description}
\item[Type (1)] \hskip-4em$\xymatrix@R-14pt@C-40pt{&w_iw_jw_k\ar[dl]\ar[dr]&\\
\sum_t\lambda_tu_tw_tw_k\ar[d]&&\sum_t\lambda_tw_iu_tw_t\ar[d]\\
\sum_s\sum_t\lambda_t\lambda_su_tu_sw_s\ar[ddr]&&\sum_s\sum_t\lambda_t\mu_su_sw_sw_t\ar[d]\\
&&\sum_r\sum_s\sum_t\lambda_t\mu_s\lambda_ru_su_rw_r\ar[dl]\\
&0&}$
\medskip

\item[Type (2)] \hskip-4em$\xymatrix@R-14pt@C-40pt{&w_iw_ju_k\ar[dl]\ar[dr]&\\
\sum_t\lambda_tu_tw_tu_k\ar[d]&&\sum_t\mu_tw_iu_tw_t\ar[d]\\
\sum_s\sum_t\lambda_t\mu_su_tu_sw_s\ar[ddr]&&\sum_s\sum_t\mu_t\mu_su_sw_sw_t\ar[d]\\
&&\sum_r\sum_s\sum_t\mu_t\mu_s\lambda_ru_su_rw_r\ar[dl]\\
&0&}$
\end{description}
\begin{multicols}{2}
\begin{description}
\item[Type (3)] \hskip-4em$\xymatrix@R-14pt@C-40pt{&w_iu_ju_k\ar[dl]\ar[ddd]\\
\sum_t\mu_tu_tw_tu_k\ar[d] & \\
\sum_s\sum_t\mu_t\mu_su_tu_sw_s\ar[dr] & \\
&0}$
\item[Type (4)] \hskip-4em$\xymatrix@R-14pt@C-40pt{&w_iu_jw_k\ar[dl]\ar[dr]&\\
\sum_t\mu_tu_tw_tw_k\ar[d]&&\sum_t\eta_tw_iu_tw_t\ar[d]\\
\sum_s\sum_t\mu_t\lambda_su_tu_sw_s\ar[ddr]&&\sum_s\sum_t\eta_t\mu_su_sw_sw_t\ar[d]\\
&&\sum_r\sum_s\sum_t\eta_t\mu_s\lambda_ru_su_rw_r\ar[dl]\\
&0&}$
\end{description}
\end{multicols}
\begin{multicols}{2}
\begin{description}
\item[Type (5)] $\xymatrix@R-14pt@C-30pt{u_iu_ju_k\ar@/_1pc/[d]\ar@/^1pc/[d]\\
0}$
\item[Type (6)]  $\xymatrix@R-14pt@C-30pt{u_iu_jw_k\ar[dd]\ar[dr]&\\
&\sum_t\eta_tu_iu_tw_t\ar[dl]\\
0}$
\end{description}
\end{multicols}
\begin{multicols}{2}
\begin{description}
\item[Type (7)] \hskip-4em$\xymatrix@R-14pt@C-40pt{&u_iw_jw_k\ar[dl]\ar[dr]&\\
\sum_t\eta_tu_tw_tw_k\ar[d] && \sum_t\lambda_tu_iu_tw_t\ar[ddl]\\
\sum_s\sum_t\eta_t\lambda_su_tu_sw_s\ar[dr] && \\
&0&}$
\item[Type (8)]  \hskip-4em$\xymatrix@R-14pt@C-40pt{&u_iw_ju_k\ar[dl]\ar[dr]&\\
\sum_t\eta_tu_tw_tu_k\ar[d] && \sum_t\mu_tu_iu_tw_t\ar[ddl]\\
\sum_s\sum_t\eta_t\mu_su_tu_sw_s\ar[dr] && \\
&0&}$
\end{description}
\end{multicols}
Since all the critical monomials are confluent, $A_\bullet$ is Koszul, as well as its quadratic dual.

By the same principle, thanks to condition (i) any degree $3$ monomial in $A_\bullet$ can be reduced to a linear combination of monomials containing two consecutive occurrences of elements in $U$, at which stage Condition (ii) forces them to be zero. Therefore $A_3=0$ and so, by Proposition \ref{prop:koszul}, $\cd(A^!_\bullet)=2$.
\end{proof}}


\subsection{Koszul algebras of elementary type}

The class of {\sl Koszul algebras of $H$-elementary type} is the class of those Koszul algebras which are constructible starting 
from trivial quadratic algebras and the cohomology algebras of Demushkin groups (cf.~Example~\ref{ex:demushkin mild}), and taking wedge products with the 1-generated trivial quadratic algebra $\bfQ_\bullet(\F_p,\F_p^{\otimes2})$ and direct sums.
Dually, the class of {\sl Koszul algebras of $G$-elementary type} is the class of those Koszul algebras which are constructible starting 
from free algebras and Demushkin algebras (cf.~Example~\ref{ex:demushkin algebra}), and taking symmetric tensor products with the 1-generated free algebra $\F_p[X_1]$ and free products (cf.~\cite{cq:onerel}*{Def.~5.8}).

\begin{rem}\label{remETC}\rm
The definition of Koszul algebras of $H$- and $G$-elementary type mimics the definition of {\sl pro-$p$ groups of elementary type}, given by I.~Efrat in \cite{ido:small}*{\S~3}: these groups are the finitely generated pro-$p$ groups which are constructible starting from $\Z_p$ and Demushkin groups, and taking certain semidirect products with $\Z_p$ and free products.
It is conjectured that if a finitely generated pro-$p$ group occurs as maximal pro-$p$ Galois group of a field containing a root of 1 of order $p$, then it is a pro-$p$ group of elementary type (cf.~\cite{ido:ETC}, see also \cite{ido:Q}*{Ques.~4.8} and \cite{marshall}*{\S~10}).
\end{rem}

For a quadratic algebra $A_\bullet$, set 
\[
 k(A_\bullet)=\max\{n\in\dbN\mid A_n\neq0\}.
\]
If $A_\bullet$ and $B_\bullet$ are quadratic algebras, then one has 
\begin{equation}
\begin{split}
 &k(A_\bullet\oplus B_\bullet)=\max\{k(A_\bullet),k(B_\bullet)\}, \\
 &k\left(A_\bullet\wedge \bfQ_\bullet(\F_p,\F_p^{\otimes2})\right)=k(A_\bullet)+1.
\end{split}
\end{equation}
Therefore, if $A_\bullet$ is a Koszul algebra of $H$-elementary type such that $k(A_\bullet)=2$, then either
\begin{itemize}
 \item[(a)] $A_\bullet\cong H^\bullet(G)$ for an infinite Demushkin group $G$;
 \item[(b)] or $A_\bullet\cong \bfQ_\bullet(V,V^{\otimes2})\wedge \bfQ_\bullet(\F_p,\F_p^{\otimes2})$, for some vector space $V$, $\dim(V)\geq1$;
 \item[(c)] or $A_\bullet\cong B_\bullet\oplus C_\bullet$, with $B_\bullet,C_\bullet$ Koszul algebras of $H$-elementary type such that $k(B_\bullet)=2$ and $k(C_\bullet)\leq2$.
\end{itemize}
Dually, if $A_\bullet$ is a Koszul algebra of $G$-elementary type such that $k(A^!_\bullet)=2$, then either
\begin{itemize}
 \item[(a')] $A_\bullet$ is an infinite Demushkin algebra;
 \item[(b')] or $A_\bullet\cong \FpX\otimes \F_p[X_0]$, with $X=\{X_1,\ldots,X_d\}$, $d\geq1$;
 \item[(c')] or $A_\bullet\cong B_\bullet\sqcup C_\bullet$, with $B_\bullet,C_\bullet$ Koszul algebras of $G$-elementary type such that $k(B_\bullet^!)=2$ and $k(C_\bullet^!)\leq2$;
\end{itemize}
respectively.

If the algebra $H^\bullet(G)$, respectively $\gr\F_p[G]$, associated to a finitely generated pro-$p$ group $G$, ends up in the above subclass of Koszul algebras of $H$-elementary type, respectively in the above subclass of Koszul algebras of $G$-elementary type, then $G$ is mild.

\begin{prop}
 Let $G$ be a finitely presented pro-$p$ group.
 The following are equivalent:
 \begin{itemize}
  \item[(i)] the $\F_p$-cohomology algebra $H^\bullet(G)$ is a Koszul algebra of $H$-elementary type, and $H^3(G)=0$;
\item[(ii)] the graded group algebra $\gr\F_p[G]$ is a Koszul algebra of $G$-elementary type, and $(\gr\F_p[G]^!)_3=0$.
 \end{itemize}
Moreover, if the above conditions hold, then $G$ is mild.
\end{prop}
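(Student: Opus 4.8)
The plan is to reduce both implications to Theorem~\ref{thm:mildkoszul}, using as a preliminary the fact that quadratic duality interchanges the two elementary-type classes. First I would record this \emph{duality of classes}: the assignment $A_\bullet\mapsto A_\bullet^!$ restricts to a bijection between the Koszul algebras of $H$-elementary type and the Koszul algebras of $G$-elementary type. This follows by induction on the length of the defining construction, since the generating blocks correspond under duality by Example~\ref{ex:quadratic}(a) (trivial quadratic algebra versus free algebra) and Example~\ref{ex:demushkin mild} (cohomology versus graded group algebra of a Demushkin group), the distinguished one-generated blocks $\bfQ_\bullet(\F_p,\F_p^{\otimes2})$ and $\F_p[X_1]$ are quadratic dual, and the building operations are interchanged by \eqref{eq:quaddual operations}, all combined via $(A_\bullet^!)^!=A_\bullet$ (Remark~\ref{rem:dual}). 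I would also observe at once that each of (i) and (ii) forces $G$ to be mild: a Koszul algebra is generated in degree $1$, so vanishing in degree $3$ propagates to all degrees $\geq3$; hence under (i) one gets $\cd(G)\leq2$ and $H^\bullet(G)$ lands in the displayed subclass of $H$-elementary type (or $G$ is free, if $H^2(G)=0$), while under (ii) the algebra $\gr\F_p[G]$ lands in the displayed subclass of $G$-elementary type. In either case the discussion preceding the statement yields that $G$ is mild, which already settles the final assertion.

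For (i)$\Rightarrow$(ii), assume $H^\bullet(G)$ is Koszul of $H$-elementary type with $H^3(G)=0$. Then $G$ is mild and $H^\bullet(G)$ is quadratic, so Theorem~\ref{thm:mildkoszul} provides an isomorphism of quadratic algebras $\gr\F_p[G]\cong H^\bullet(G)^!$ with both sides Koszul. By the duality of classes $\gr\F_p[G]\cong H^\bullet(G)^!$ is then of $G$-elementary type, and by Remark~\ref{rem:dual} one has $\gr\F_p[G]^!\cong(H^\bullet(G)^!)^!\cong H^\bullet(G)$, whence $(\gr\F_p[G]^!)_3=H^3(G)=0$. This is exactly (ii).

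For (ii)$\Rightarrow$(i) I would run the argument backwards, the delicate point being to recover the \emph{quadraticity} of $H^\bullet(G)$, which is the hypothesis under which Theorem~\ref{thm:mildkoszul} and Proposition~\ref{prop:HG} operate. Assume $\gr\F_p[G]$ is Koszul of $G$-elementary type with $(\gr\F_p[G]^!)_3=0$; then $G$ is mild and $\gr\F_p[G]$ is quadratic. Taking a minimal presentation \eqref{eq:presentation} with $d=\dim H^1(G)$ generators and initial forms of degrees $s_1,\dots,s_m$, Proposition~\ref{prop:mild}(ii) identifies $\gr\F_p[G]$ with $\FpX/\mathcal{R}$; comparing the strongly free Hilbert series \eqref{eq:hilbert} with the quadraticity of $\gr\F_p[G]$ forces $s_i=2$ for every $i$. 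Since the $m$ (linearly independent) degree-$2$ initial forms span $RS_{(3)}/S_{(3)}\cong R/(R\cap S_{(3)})$ and the number of defining relations equals $\dim H^2(G)=\dim\big(R/R^p[R,S]\big)=m$, the inclusion $R^p[R,S]\subseteq R\cap S_{(3)}$ is an equality; hence by Proposition~\ref{prop:H2} the cup product $H^1(G)^{\otimes2}\to H^2(G)$ is surjective and $H^\bullet(G)$ is generated in degree $1$. Moreover, in bidegree $(1,1)$ the cup product is dual to the inclusion of the relation space $RS_{(3)}/S_{(3)}$ of $\gr\F_p[G]$ (cf.~Proposition~\ref{prop:H2} and \eqref{eq:H1H2}), so $\ker(\cup)$ is its orthogonal complement; therefore $\gr\F_p[G]^!\cong\bfQ_\bullet(H^1(G),\ker(\cup))$, the quadratic cover of $H^\bullet(G)$, and there is a canonical surjection $\gr\F_p[G]^!\twoheadrightarrow H^\bullet(G)$.

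It remains to see that this surjection is an isomorphism. Since $\gr\F_p[G]^!$ is quadratic, hence generated in degree $1$, the hypothesis $(\gr\F_p[G]^!)_3=0$ forces $(\gr\F_p[G]^!)_n=0$ for all $n\geq3$, so its graded dimensions are $1,d,m,0,0,\dots$, with $m=\dim H^2(G)$. On the other side $\cd(G)=2$ (Proposition~\ref{prop:mild}(i)) gives $H^n(G)=0$ for $n\geq3$, while $H^0(G),H^1(G),H^2(G)$ have dimensions $1,d,m$ as well. Hence the canonical surjection is an isomorphism in every degree, so $H^\bullet(G)\cong\gr\F_p[G]^!$ is quadratic; being the quadratic dual of a Koszul $G$-elementary algebra it is Koszul of $H$-elementary type (by Proposition~\ref{prop:koszul} and the duality of classes), and $H^3(G)=(\gr\F_p[G]^!)_3=0$. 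This establishes (i). I expect the main obstacle to be precisely this last matching: one must simultaneously rule out initial forms of relations of degree $>2$ and extra cohomological relations in degree $3$, and it is the Hilbert-series bookkeeping forced by the Koszulity of $\gr\F_p[G]$ together with $\cd(G)=2$ that closes the gap.
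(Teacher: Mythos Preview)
Your direction (i)$\Rightarrow$(ii) matches the paper's argument, except that where you invoke ``the discussion preceding the statement'' to get mildness, the paper cites Proposition~\ref{prop:mild cohom} (Koszul cohomology together with $\cd(G)\le2$ forces mildness). That is a cosmetic difference only.

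The direction (ii)$\Rightarrow$(i), however, has a genuine circularity. You obtain mildness of $G$ by appealing to ``the discussion preceding the statement'', and then you feed mildness into Proposition~\ref{prop:mild}(ii) to identify $\gr\F_p[G]$ with $\FpX/\mathcal{R}$. But that sentence preceding the proposition is an \emph{announcement}, not a proof: the proposition itself is where the claim ``$\gr\F_p[G]$ of $G$-elementary type with $k=2$ implies $G$ mild'' is established. There is no analogue of Proposition~\ref{prop:mild cohom} on the $\gr\F_p[G]$ side that you can cite; you have to prove mildness from (ii) by hand. Everything you do after that point (degree-$2$ initial forms, $R^p[R,S]=R\cap S_{(3)}$, the quadratic cover surjecting onto $H^\bullet(G)$, dimension matching) is correct, but it all rests on the unjustified assumption that $G$ is already mild.

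The paper avoids this trap by reversing the order of your bookkeeping. It does \emph{not} assume mildness; instead, from the hypotheses of (ii) alone it reads off $h_{\gr\F_p[G]^!}(z)=1+dz+mz^2$ (since Koszul $\Rightarrow$ generated in degree~$1$, so $(\gr\F_p[G]^!)_3=0$ kills all higher degrees), and then Proposition~\ref{prop:koszul A3}(ii) gives $h_{\gr\F_p[G]}(z)=(1-dz+mz^2)^{-1}$ directly. This Hilbert series equality is precisely the strong-freeness condition for the generators of the relation space of $\gr\F_p[G]$, and \emph{that} is what yields mildness. Only then does the paper invoke $\cd(G)=2$ and conclude $H^\bullet(G)\cong\gr\F_p[G]^!$. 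In short: your Hilbert-series matching should come first, to \emph{produce} mildness, rather than last, after mildness has been (illicitly) assumed.
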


\begin{proof}
 If condition~(i) holds, then $G$ is mild by Proposition~\ref{prop:mild cohom}.
 Then by Proposition~\ref{prop:mild} one has an isomorphism of quadratic algebras $\gr\F_p[G]\cong H^\bullet(G)^!$, and thus $\gr\F_p[G]$ is a Koszul algebra of $G$-elementary type.
 In particular, $(\gr\F_p[G]^!)_3\cong H^3(G)=0$.
 
Now suppose condition~(ii) holds, and set $\gr\F_p[G]=:A_\bullet=\bfQ_\bullet(A_1,\Omega)$, with $\Omega$ a subspace of $A_1^{\otimes2}$.
Since $A_\bullet$ is Koszul, also $A_\bullet^!$ is Koszul. 
Since $A_3^!=0$, one has 
\[\begin{split}
    h_{A_\bullet^!}(z) &= 1+\dim(A_1^!)z+\dim(A_2^!)z^2+0 \\ 
    &=1+\dim(A_1)z+\left(\dim(T_2(A_1)-\dim(A_2)\right)z^2 \\&=1+dz+mz^2
  \end{split}
\]
where $d=\dim(G/G_{(2)})=\dim(H^1(G))$ and $m=\dim(H^2(G))=\dim(\Omega)$.
Hence, Proposition~\ref{prop:koszul A3} implies 
\[
 h_{A_\bullet}(z)=\frac{1}{h_{A_\bullet^!}(-z)}=\frac{1}{1-dz+mz^2},
\]
i.e., the relations which generate $\Omega$ give rise to a strongly free sequence, and $G$ is mild.
Hence $H^3(G)=0$ by Proposition~\ref{prop:mild}, and $H^\bullet(G)\cong A_\bullet^!$. In particular, $H^\bullet(G)$ is a Koszul algebra of $H$-elementary type.
\end{proof}

In \cite{cq:onerel} it is conjectured that if $\K$ is a field containing a root of 1 of order $p$ (and containing also $\sqrt{-1}$ if $p=2$), then $H^\bullet(G)$ is a Koszul algebra of $H$-elementary type, and $\gr\F_p[G]$ is a Koszul algebra of $G$-elementary type (cf.~\cite{cq:onerel}*{Conj.~5.10}).
This conjecture is weaker than Efrat's conjecture (cf.~Remark~\ref{remETC}), but it is stronger than Conjectures~\ref{conj:posi}--\ref{ques:koszul}: namely, a positive answer to Efrat's conjecture would imply a positive answer to \cite{cq:onerel}*{Conj.~5.10}, which in turn would imply a positive answer to Conjectures~\ref{conj:posi}--\ref{ques:koszul}.

In particular, if the algebra $H^\bullet(G_{\K}(p))$ is a Koszul algebra of $H$-elementary type for every finitely generated maximal pro-$p$ Galois group $G_{\K}(p)$ with $\cd(G_{\K}(p))=2$, then $G_{\K}(p)$ is mild by Proposition~\ref{prop:mild cohom} and the Norm Residue Theorem.
This raises the following question.

\begin{question}
Let $\K$ be a field containing a root of 1 of order $p$ (and also $\sqrt{-1}$ if $p=2$), and suppose that $G_{\K}(p)$ is a finitely generated pro-$p$ group with $\mathrm{cd}(G_{\K}(p))=2$.
Is $G_{\K}(p)$ mild?
\end{question}


\begin{bibdiv}
\begin{biblist}

\bib{anick}{article}{
   author={Anick, D. J.},
   title={Noncommutative graded algebras and their Hilbert series},
   journal={J. Algebra},
   volume={78},
   date={1982},
   number={1},
   pages={120--140},
   issn={0021-8693},}


\bib{BGS}{article}{
   author={Beilison, A.},
   author={Ginzburg, V.},
   author={Soergel, W.},
   title={Koszul duality patterns in representation theory},
   journal={J. Amer. Math. Soc.},
   volume={9},
   date={1996},
   pages={473--527},
}

\bib{bglv:2rel}{article}{
   author={Bush, M. R.},
   author={G\"{a}rtner, J.},
   author={Labute, J. P.},
   author={Vogel, D.},
   title={Mild 2-relator pro-$p$-groups},
   journal={New York J. Math.},
   volume={17},
   date={2011},
   pages={281--294},
}

\bib{CEM}{article}{
   author={Chebolu, S. K.},
   author={Efrat, I.},
   author={Mina\v{c}, J.},
   title={Quotients of absolute Galois groups which determine the entire
   Galois cohomology},
   journal={Math. Ann.},
   volume={352},
   date={2012},
   number={1},
   pages={205--221},
   issn={0025-5831},
}
\bib{CMQ:fast}{article}{
   author={Chebolu, S. K.},
   author={Mina\v{c}, J.},
   author={Quadrelli, C.},
   title={Detecting fast solvability of equations via small powerful Galois
   groups},
   journal={Trans. Amer. Math. Soc.},
   volume={367},
   date={2015},
   number={12},
   pages={8439--8464},
   issn={0002-9947},
}

\bib{diestel}{book}{
   author={Diestel, R.},
   title={Graph theory},
   series={Graduate Texts in Mathematics},
   volume={173},
   edition={5},
   publisher={Springer, Berlin},
   date={2018},
}

\bib{ddsms}{book}{
   author={Dixon, J. D.},
   author={du Sautoy, M. P. F.},
   author={Mann, A.},
   author={Segal, D.},
   title={Analytic pro-$p$ groups},
   series={Cambridge Studies in Advanced Mathematics},
   volume={61},
   edition={2},
   publisher={Cambridge University Press, Cambridge},
   date={1999},
   pages={xviii+368},
   isbn={0-521-65011-9},
}

\bib{ido:ETC}{article}{
   author={Efrat, I.},
   title={Orderings, valuations, and free products of Galois groups},
   journal={Sem. Structure Alg{\'e}briques Ordonn{\'e}es, Univ. Paris VII},
   volume={54},
   date={1995},
}
\bib{ido:Q}{article}{
   author={Efrat, I.},
   title={Pro-$p$ Galois groups of algebraic extensions of $\mathbf{Q}$},
   journal={J. Number Theory},
   volume={64},
   date={1997},
   number={1},
   pages={84--99},
   issn={0022-314X},
}

\bib{ido:small}{article}{
   author={Efrat, I.},
   title={Small maximal pro-$p$ Galois groups},
   journal={Manuscripta Math.},
   volume={95},
   date={1998},
   number={2},
   pages={237--249},
   issn={0025-2611},
}

\bib{forre}{article}{
   author={Forr\'{e}, P.},
   title={Strongly free sequences and pro-$p$-groups of cohomological
   dimension 2},
   journal={J. Reine Angew. Math.},
   volume={658},
   date={2011},
   pages={173--192},
   issn={0075-4102},}

\bib{froberg}{article}{
   author={Fr\"{o}berg, R.},
   title={Determination of a class of Poincar\'{e} series},
   journal={Math. Scand.},
   volume={37},
   date={1975},
   number={1},
   pages={29--39},
   issn={0025-5521},
}

\bib{jochen:mild}{article}{
   author={G\"{a}rtner, J.},
   title={Higher Massey products in the cohomology of mild pro-$p$-groups},
   journal={J. Algebra},
   volume={422},
   date={2015},
   pages={788--820},
   issn={0021-8693},}

\bib{HW:book}{book}{
   author={Haesemeyer, C.},
   author={Weibel, Ch.},
   title={The norm residue theorem in motivic cohomology},
   series={Annals of Mathematics Studies},
   volume={200},
   publisher={Princeton University Press, Princeton, NJ},
   date={2019},
}

\bib{jac}{book}{
   author={Jacobson, N.},
   title={Lie algebras},
   note={Republication of the 1962 original},
   publisher={Dover Publications, Inc., New York},
   date={1979},
   pages={ix+331},}
   
   \bib{koch:2}{article}{
   author={Koch, H.},
   title={\"{U}ber Pro-$p$-Gruppen der kohomologischen Dimension $2$},
   language={German},
   journal={Math. Nachr.},
   volume={78},
   date={1977},
   pages={285--289},
   issn={0025-584X},
}
   
\bib{koch:buch}{book}{
   author={Koch, H.},
   title={Galois theory of {$p$}-extensions},
   series={Springer Monographs in Mathematics},
   note={With a foreword by I. R. Shafarevich; Translated from the 1970 German original by Franz Lemmermeyer; With a postscript by the author and Lemmermeyer},
   publisher={Springer-Verlag, Berlin},
   date={2002},
   pages={xiv+190},
   isbn={3-540-43629-4},
}

\bib{koszul:survey}{article}{
   author={Kumar, N.},
   title={A Survey on Koszul Algebras and Koszul Duality},
   conference={
      title={Leavitt Path Algebras and Classical $K$-Theory},
   },
   book={
      series={Indian Statistical Institute Series},
      publisher={Springer, Singapore},
   },
   editor={Ambily, A.},
   editor={Roozbeh, R.},
   editor={Sury, B.},
   date={2020},
   pages={157--176},
}

\bib{labute:demushkin}{article}{
   author={Labute, J. P.},
   title={Classification of Demushkin groups},
   journal={Canad. J. Math.},
   volume={19},
   date={1967},
   pages={106--132},
   issn={0008-414X},
}

\bib{labute:algebres}{article}{
   author={Labute, J. P.},
   title={Alg\`ebres de Lie et pro-$p$-groupes d\'{e}finis par une seule relation},
   language={French},
   journal={Invent. Math.},
   volume={4},
   date={1967},
   pages={142--158},
   issn={0020-9910},
}
	
\bib{labute:lie}{article}{
   author={Labute, J. P.},
   title={The determination of the Lie algebra associated to the lower central series of a group},
   journal={Trans. Amer. Math. Soc.},
   volume={288},
   date={1985},
   number={1},
   pages={51--57},
   issn={0002-9947},
}

\bib{labute:mild}{article}{
   author={Labute, J. P.},
   title={Mild pro-$p$-groups and Galois groups of $p$-extensions of $\mathbb{Q}$},
   journal={J. Reine Angew. Math.},
   volume={596},
   date={2006},
   pages={155--182},
   issn={0075-4102},}

\bib{LM:mild2}{article}{
   author={Labute, J. P.},
   author={Mina\v{c}, J.},
   title={Mild pro-2-groups and 2-extensions of $\mathbb{Q}$ with restricted ramification},
   journal={J. Algebra},
   volume={332},
   date={2011},
   pages={136--158},
   issn={0021-8693},
}
\bib{lodval:book}{book}{
   author={Loday, J.-L.},
   author={Vallette, B.},
   title={Algebraic operads},
   series={Grundlehren der Mathematischen Wissenschaften [Fundamental Principles of Mathematical Sciences]},
   volume={346},
   publisher={Springer, Heidelberg},
   date={2012},
   pages={xxiv+634},}
   
\bib{maire}{article}{
   author={Maire, C.},
   title={Some examples of fab and mild pro-$p$-groups with trivial cup-product},
   journal={Kyushu J. Math.},
   volume={68},
   date={2014},
   number={2},
   pages={359--376},
   issn={1340-6116},
}
	
\bib{marshall}{article}{
   author={Marshall, M.},
   title={The elementary type conjecture in quadratic form theory},
   conference={
      title={Algebraic and arithmetic theory of quadratic forms},
   },
   book={
      series={Contemp. Math.},
      volume={344},
      publisher={Amer. Math. Soc., Providence, RI},
   },
   date={2004},
   pages={275--293},
}

\bib{MPQT}{article}{
   author={Mina\v{c}, J.},
   author={Pasini, F.W.},
   author={Quadrelli, C.},
   author={T\^{a}n, N.D.},
   title={Koszul algebras and quadratic duals in Galois cohomology},
   journal={Adv. Math.},
   volume={380},
   date={2021},
   pages={107569, 49},
   issn={0001-8708},
}

\bib{nsw:cohn}{book}{
   author={Neukirch, J.},
   author={Schmidt, A.},
   author={Wingberg, K.},
   title={Cohomology of number fields},
   series={Grundlehren der Mathematischen Wissenschaften [Fundamental
   Principles of Mathematical Sciences]},
   volume={323},
   edition={2},
   publisher={Springer-Verlag, Berlin},
   date={2008},
   pages={xvi+825},
   isbn={978-3-540-37888-4},}

\bib{papa}{article}{
   author={Papadima, S.},
   author={Suciu, A.I.},
   title={Algebraic invariants for right-angled Artin groups},
   journal={Math. Ann.},
   volume={334},
   date={2006},
   number={3},
   pages={533--555},
   issn={0025-5831},
}   
   
\bib{poliposi:book}{book}{
   author={Polishchuk, A.},
   author={Positselski, L.},
   title={Quadratic algebras},
   series={University Lecture Series},
   volume={37},
   publisher={American Mathematical Society, Providence, RI},
   date={2005},
   pages={xii+159},
   isbn={0-8218-3834-2},}

\bib{pos:k}{article}{
   author={Positselski, L.},
   title={Koszul property and Bogomolov's conjecture},
   journal={Int. Math. Res. Not.},
   date={2005},
   number={31},
   pages={1901--1936},
   issn={1073-7928},
}

\bib{pos:conj}{article}{
   author={Positselski, L.},
   title={Galois cohomology of a number field is Koszul},
   journal={J. Number Theory},
   volume={145},
   date={2014},
   pages={126--152},
   issn={0022-314X},
}

\bib{posi:form}{article}{
   author={Positselski, L.},
   title={Koszulity of cohomology = $K(\pi,1)$-ness + quasi-formality},
   journal={J. Algebra},
   volume={483},
   date={2017},
   pages={188--229},
   issn={0021-8693},
}

\bib{posivis}{article}{
   author={Positselski, L.},
   author={Vishik, A.},
   title={Koszul duality and Galois cohomology},
   journal={Math. Res. Lett.},
   volume={2},
   date={1995},
   number={6},
   pages={771--781},
}

\bib{priddy}{article}{
   author={Priddy, S. B.},
   title={Koszul resolutions},
   journal={Trans. Amer. Math. Soc.},
   volume={152},
   date={1970},
   pages={39--60},
   issn={0002-9947},}
  
\bib{cq:bk}{article}{
   author={Quadrelli, C.},
   title={Bloch-Kato pro-$p$ groups and locally powerful groups},
   journal={Forum Math.},
   volume={26},
   date={2014},
   number={3},
   pages={793--814},
   issn={0933-7741},
}

\bib{cq:2relUK}{article}{
   author={Quadrelli, C.},
   title={Pro-$p$ groups with few relations and universal Kosulity},
   journal={Math. Scand.},
   volume={127},
   date={2021},
   number={1},
   pages={28--42},
} 

\bib{cq:onerel}{article}{
   author={Quadrelli, C.},
   title={One-relator maximal pro-{$p$} {G}alois groups and the Kosulity conjectures},
   journal={Q. J. Math.},
   date={2021},
number={3},
pages={835--854},
}

\bib{qsv:quad}{unpublished}{
   author={Quadrelli, C.},
   author={Snopce, I.},
   author={Vannacci, M.},
   title={On pro-{$p$} groups with quadratic cohomology},
   date={2019},
   note={available at {\tt arXiv:1906.01695}}
}

\bib{qw:cyc}{article}{
   author={Quadrelli, C.},
   author={Weigel, Th.},
   title={Profinite groups with a cyclotomic $p$-orientation},
   journal={Doc. Math.},
   date={2020},
   number={25},
   pages={1881--1916},
} 

\bib{ribzal:book}{book}{
   author={Ribes, L.},
   author={Zalesskii, P. A.},
   title={Profinite groups},
   series={Ergebnisse der Mathematik und ihrer Grenzgebiete. 3. Folge. A
   Series of Modern Surveys in Mathematics [Results in Mathematics and
   Related Areas. 3rd Series. A Series of Modern Surveys in Mathematics]},
   volume={40},
   edition={2},
   publisher={Springer-Verlag, Berlin},
   date={2010},
   pages={xvi+464},
   isbn={978-3-642-01641-7},
}

\bib{rost}{article}{
   author={Rost, M.},
   title={Norm varieties and algebraic cobordism},
   conference={
      title={Proceedings of the International Congress of Mathematicians,
      Vol. II},
      address={Beijing},
      date={2002},
   },
   book={
      publisher={Higher Ed. Press, Beijing},
   },
   date={2002},
   pages={77--85},}

\bib{salle:mild}{article}{
   author={Salle, L.},
   title={Mild pro-$p$-groups as Galois groups over global fields},
   journal={Int. J. Number Theory},
   volume={5},
   date={2009},
   number={5},
   pages={779--795},
   issn={1793-0421},
}
\bib{schmidt:circ}{article}{
   author={Schmidt, A.},
   title={Circular sets of prime numbers and $p$-extensions of the
   rationals},
   journal={J. Reine Angew. Math.},
   volume={596},
   date={2006},
   pages={115--130},
   issn={0075-4102},,
}
	\bib{Schmidt}{article}{
   author={Schmidt, A.},
   title={\"{U}ber pro-$p$-Fundamentalgruppen markierter arithmetischer Kurven},
   language={German, with English summary},
   journal={J. Reine Angew. Math.},
   volume={640},
   date={2010},
   pages={203--235},
   issn={0075-4102},}

\bib{ip:RAAGs}{unpublished}{
   author={Snopce, I.},
   author={Zalesskii, P. A.},
   title={Right-angled Artin pro-{$p$} groups},
   date={2020},
   note={Preprint, available at {\tt arXiv:2005.01685}}
} 

\bib{voev}{article}{
   author={Voevodsky, V.},
   title={On motivic cohomology with $\mathbf{Z}/l$-coefficients},
   journal={Ann. of Math. (2)},
   volume={174},
   date={2011},
   number={1},
   pages={401--438},
   issn={0003-486X},
}
\bib{weibel:lectures}{article}{
   author={Weibel, Ch.},
   title={2007 Trieste lectures on the proof of the Bloch-Kato conjecture},
   conference={
      title={Some recent developments in algebraic $K$-theory},
   },
   book={
      series={ICTP Lect. Notes},
      volume={23},
      publisher={Abdus Salam Int. Cent. Theoret. Phys., Trieste},
   },
   date={2008},
   pages={277--305},
}

\bib{weibel:norm}{article}{
   author={Weibel, Ch.},
   title={The norm residue isomorphism theorem},
   journal={J. Topol.},
   volume={2},
   date={2009},
   number={2},
   pages={346--372},
   issn={1753-8416},
}
	
\bib{thomas:graded}{article}{
   author={Weigel, Th.},
   title={Graded Lie algebras of type FP},
   journal={Israel J. Math.},
   volume={205},
   date={2015},
   number={1},
   pages={185--209},
   issn={0021-2172},
}

\bib{thomas:koszul}{article}{
   author={Weigel, Th.},
   title={Koszul Lie algebras},
   conference={
      title={Lie algebras and related topics},
   },
   book={
      series={Contemp. Math.},
      volume={652},
      publisher={Amer. Math. Soc., Providence, RI},
   },
   date={2015},
   pages={241--242},
}

\end{biblist}
\end{bibdiv}
\end{document}